\newtheorem{theorem}{Theorem}[section]
\newtheorem{thm}{Theorem}[section]
\newtheorem{lem}[theorem]{Lemma}
\theoremstyle{definition}
\newtheorem{defn}[theorem]{Definition}
\newtheorem{example}[theorem]{Example}
\newtheorem{question}[theorem]{Question}
\theoremstyle{remark}
\numberwithin{equation}{section}
\newcommand\EE{\mathbb E}
\newcommand\HH{\mathbb H}
\newcommand\NN{\mathbb N}
\newcommand\RR{\mathbb R}
\newcommand\ZZ{\mathbb Z}
\newcommand\FF{\mathbb F}
\newcommand\cB{\mathcal{B}}
\newcommand\cC{\mathcal{C}}
\newcommand\cI{\mathcal{I}}
\newcommand\cL{\mathcal{L}}
\newcommand\cF{\mathcal{F}}
\newcommand\cR{\mathcal{R}}
\newcommand\Inn{\operatorname{Inn}}
\newcommand\hT{\widehat T}
\newcommand\norm[1]{\left\|#1\right\|}
\newcommand\abs[1]{\left|#1\right|}
\newcommand\set[1]{\left\{{#1}\right\}}
\def\SUM{{\textrm{SUM}}} 
\def\RATIO{{\textrm{RATIO}}} 
\def\sS{{\textrm{S}}}
\def\cc{\curvearrowright}
\def\f{{\textrm{fin}}}
\begin{document}
\title{A horospherical ratio ergodic theorem \\ for actions of free groups}

\author{Lewis Bowen\footnote{supported in part by NSF grant DMS-1000104 and BSF grant 2008274} ~and Amos Nevo\footnote{supported in part by ISF, and  BSF grant 2008274}}







\maketitle

\begin{abstract}
We prove a ratio ergodic theorem for amenable equivalence relations satisfying a strong form of the Besicovich covering property. We then use this result to study general non-singular actions of non-abelian free groups and establish a ratio ergodic theorem for averages along horospheres.
\end{abstract}

\tableofcontents

\section{Introduction}

Consider a non-singular action of a countable group $G$ on a standard $\sigma$-finite measure space $(X, \cB,\eta)$, which we denote by $x \mapsto T^g x$. From the action on $X$, there is an induced isometric action on $L^\infty(X)$ also denoted by $T^g$ given by $T^gf(x)=f(T^{g^{-1}}x)$. This induces an isometric action on the  Banach pre-dual of $L^\infty(X,\eta)$, namely on $L^1(X,\eta)$ which is given by $\hT^g(f)=T^{g}f\cdot\frac{dg\eta}{d\eta}$. 

This set-up gives rise to a wide array of important and interesting examples of actions, where one would like to study the statistical properties of the distribution of the orbits of $G$ in $X$.  Very little is  known about this  problem for general groups, and let us begin by reviewing the main results in the case of actions of Abelian groups. 

\subsection{The ratio ergodic theorem for commuting transformations}

For $\ZZ$-actions there is a generalization of Birkhoff's pointwise ergodic theorem due to Hopf [Ho37], later generalized by Hurewicz [Hu44] and extended to operators by Chacon-Ornstein [CO60]. Hopf's ratio ergodic theorem states that if $T:(X,\lambda) \to (X,\lambda)$ is non-singular and conservative, $u,v \in L^1(X,\lambda)$ and $\int v~d\lambda \ne 0$ then the ratios
$$\RATIO_n[u,v] := \frac{ \sum_{k=0}^n \hT^k u}{\sum_{k=0}^n \hT^k v}$$
converge pointwise almost everywhere as $n\to\infty$ to a function $r(u,v)$ on $X$ satisfying
\begin{itemize}
\item $r(u,v)\circ T = r(u,v)$, namely $r(u,v)$ is $T$-invariant, 
\item $\int f\cdot r(u,v)v~d\lambda = \int f \cdot u ~d\lambda$ for any $f\in L^\infty(X)$ such that $f\circ T=f$.
\end{itemize}
In particular, if $T$ is ergodic then $r(u,v)$ equals the constant $\frac{\int u~d\lambda}{\int v ~d\lambda}$ almost everywhere. In general,
$$r(u,v) = \EE_{\lambda_v}\left[\frac{u}{v}\Big|\cI\right]$$
is the conditional expectation of $\frac{u}{v}$ on the $\sigma$-algebra $\cI$ of $T$-invariant sets with respect to the measure $\lambda_v$ defined by $d\lambda_v=vd\lambda$.

This result has only recently been extended to $\ZZ^d$ actions by Feldman [Fe07] and by  Hochman [Ho10]. There are no known results of a similarly general nature for any non-amenable group; nor are there counterexamples. In particular the following basic problem is open.

\begin{question}\label{q:main}
Let $\FF=\langle a_1,\ldots, a_r\rangle$ be a rank $r \ge 2$ free group. Let $(T^g)_{g\in \FF}$ be a conservative action on a standard $\sigma$-finite measure space $(X, \cB,\lambda)$ by non-singular transformations. Let $(\hT^g)_{g\in \FF} $ be the induced isometric action on $L^1(X)$,  the Banach pre-dual of $L^\infty(X)$. For $g\in \FF$, let $|g|$ denote its word length with respect to the given generators. For $u,v \in L^1(X)$ with $\int v~d\lambda \ne 0$ let
$$\RATIO_n[u,v] := \frac{ \sum_{|g| \le n} \hT^g u}{\sum_{|g|\le  n} \hT^k v}.$$
Then does the sequence $\RATIO_{2n}[u,v]$ converge pointwise almost everywhere as $n\to\infty$ ? and if so, what is its limit ?
\end{question}
It is necessary to consider $\RATIO_{2n}[u,v]$ rather than $\RATIO_{n}[u,v]$ due to a certain well-known periodicity phenomenon which occurs if the action has an eigenfunction with eigenvalue $-1$.

The same question arises with $|g| \le n$ replaced with $|g|=n$ and 
with $L^1$ replaced with $L^p$ for  $p \in [1,+\infty]$.  It is also natural to consider averages other than the uniform averages over spheres,  for example random walk averages given by convolution powers.  One would also like to consider averages in groups other than free groups.  

\subsection{Ratio equidistribution of dense subgroups}
Part of the interest in this problem comes from the special case in which $(X,\cB,\lambda)$ is a locally compact group with Haar measure, $\phi: \Gamma \to X$ is a group homomorphism onto a dense subgroup, the action $(T^g)_{g\in \Gamma}$ is given by $T^g(x)=\phi(g)x$ and $u,v$ are compactly supported continuous functions. In this case, it is natural to ask whether the ratios $\RATIO_{n}[u,v]$ converge everywhere (instead of almost everywhere). 

Arnol'd and Krylov were among the first to consider the problem of establishing equidistribution  for actions of free groups [AK63]. They established uniform convergence in the case when the group $X$ is $SO_3(\RR)$, but the same method applies whenever the group $X$ is compact.
We note that the case that the underlying invariant measure is finite the mean and subsequently the pointwise ergodic theorem for completely general actions of free groups were considered by several authors \cite{Gu68}\cite{Gr99}\cite{N1}\cite{NS}\cite{Bu}\cite{BN1}(see also the recent survey \cite{BK}). 

 Kazhdan considered equidistribution of ratios in the case when $X$ is the isometry group of the Euclidean plane and the averages are random walk averages [Ka65]. His argument was corrected and the results extended by Guivarc'h [Gu76] (see also [Vo04]). An important advance in the case when $X$ equals the isometry group of Euclidean $n$-space was very recently obtained by Varju \cite{Va}. Breuillard has obtained positive results when $X$ is the Heisenberg group [Br05] and the averages are random walk averages. He has also obtained positive results when $X$ is any simply-connected nilpotent Lie group [Br10] and the averages are uniform over balls in the Cayley graph of the image group $\Phi(\Gamma)$. A survey of these results  can be found in  [Brxx]. 

\subsection{Some recent ratio ergodic theorems : lattice actions on homogeneous spaces}

Let us now describe some developments over the last decade, which pertain to actions of a lattice subgroup $\Gamma$ of a semisimple algebraic group $G$ on homogeneous spaces $G/H$ of $G$. In order to keep the exposition focused, let us concentrate on the case $G=SL_2(\RR)$, although all the results we will describe apply in much greater generality. 

  To gain some perspective on the subtlety of the problem of establishing ratio ergodic theorems and some of the difficulties that must be overcome, we consider the following three actions. 
Let $ \Gamma\subset SL_2(\RR)$ be any lattice subgroup, for example a lattice isomorphic to the free group $\FF_r$. 

\subsubsection{Lattice action on the boundary of the hyperbolic plane.}
Consider the homogeneous space $B=SL_2(\RR)/P=P^1(\RR)$ where $P$ is the group of upper triangular matrices. Fix a point $o\in G/K=\HH^2$, the $G$-invariant metric $d$, and the $K$-invariant probability measure $m$ on $B$. For any line $[v] \in B$, and any two continuous functions $\phi$ and $\psi$ on $B$, 
$$\lim_{T\to \infty} \frac{\sum_{d(\gamma o,o)\le T} \phi([v]\gamma)}{\sum_{d(\gamma o,o)\le T} \psi([v]\gamma)}=\frac{\int_{B}\phi([w])dm([w])}{\int_{\RR^2}\psi([w])dm([w])}
$$
This result is based on a precise asymptotics established for the expression 
$\sum_{d(\gamma o,o)\le T} \phi([v]\gamma)$ in \cite{G1}, from which the ratio equidistribution theorem is an immediate corollary.  The lattice action on the boundary $G/P$ was originally considered more generally by Gorodnik \cite{G1},  and developed further in Gorodnik-Maucourant \cite{gm} and  Gorodnik-Oh \cite{go}. 
 
\subsubsection{Lattice action on the Euclidean plane.}

Consider the homogeneous space $\RR^2\setminus\set{0}=SL_2(\RR)/N$, $N$ the upper triangular unipotent group. Fix any norm on $M_2(\RR)$.  Let $v\in \RR^2$ be any vector whose $\Gamma$-orbit is dense in $\RR^2$. Then given any two continuous compactly supported functions $\phi,\psi$ on $\RR^2$, 
$$\lim_{T\to \infty} \frac{\sum_{\norm{\gamma}\le T} \phi(v\gamma)}{\sum_{\norm{\gamma}\le T} \psi(v\gamma)}=\frac{\int_{\RR^2}\phi(w)\alpha_v(w)dw}{\int_{\RR^2}\psi(w)\alpha_v(w)dw}
$$
where $\alpha_v(w)$ is a positive continuous density on $\RR^2$ (and $dw$ denotes Lebesgue measure). Once again, this result is based on a precise asymptotics established for the expression 
$\sum_{\norm{\gamma}\le T} \phi(v\gamma)$  in Theorem 12.2 of [GW07], from which the ratio equidistribution theorem is an immediate  corollary. 

Let us note that the limiting density in the ratio theorem depends non-trivially on the starting point $v\in \RR^2$, and for any given $v$, depends non-trivially on the norm. This is shown explicitly in [GW07], which in turn generalizes previous work of Ledrappier  [LP99][LP01] and Ledrappier-Pollicott [LP03][LP05].

\subsubsection{Lattice action on the quadratic surface (de-Sitter space).}

Consider now the homogeneous space $X=SL_2(\RR)/A$, where $A$ is the diagonal group. It can be realized as the one-sheeted hyperboloid in $\RR^3$ given by the level set of a quadratic form of signature $(2,1)$ invariant under $SL_2(\RR)\cong SO^0(2,1)$. The quadratic surface in question is also called de-Sitter space.  Fix any norm on $M_2(\RR)$. 
Using  polar coordinate system on de-Sitter's space, namely 
$\mathbb{R}\times S^{1} \mapsto X: (r,\omega)\mapsto (\omega\cosh r, \sinh r)$, 
for every $\phi,\psi \in L^1(X)$ with compact support and
for almost every $v\in X$,
$$\lim_{T\to\infty} \frac{ \sum_{\norm{\gamma} < T}  \phi(v\gamma)}{ \sum_{\norm{\gamma }< T}  \psi(v\gamma)}  = \frac{ \int_{X} \phi(r,\omega)\, \cosh r\,dr\, d\omega}{ \int_{X} \psi(r,\omega)\, \cosh r\,dr\, d\omega}\,.$$
Note that the limiting density here is not the $G$-invariant measure on the quadratic surface, but it is in fact independent of the starting point and the norm in this case. 

Once again, this result is based on the precise asymptotics 
$\lim_{t\to\infty} \frac{1}{t} \sum_{\gamma\in\Gamma_t}
  \phi(v\gamma)= c_{2}(\Gamma)
 \int_{X} \phi(r,\omega)\, (\cosh r)\,dr\, d\omega$ 
(for some $c_2(\Gamma)>0$) established in \cite{GN2}. 

Further examples of precise asymptotics for actions of lattices in algebraic groups acting on homogeneous spaces with infinite invariant measure were established in \cite{GW}, and \cite{GN2}. These give rise to further ratio equidistribution theorems, or almost sure ratio ergodic theorems. A further discussion can be found in \cite{GN3}.


\subsection{A dynamical approach to ratio theorems for free groups} 
This paper explores a new approach to Question \ref{q:main} related to a recent proof of the pointwise ergodic theorem for spherical averages in free groups [BN10], and its generalization in \cite{BN2}. To set the notation, let $\FF=\langle a_1,\ldots, a_r\rangle$ denote the free group of rank $r\ge 2$. Let $S=\cup_{i=1}^r \{a_i,a_i^{-1}\}$ be the free symmetric generating set. The {\em reduced form} of an element $g\in \FF$ is the unique expression of the form $g=s_1\cdots s_n$ with $s_i \in S$ and $s_{i+1}\ne s_i^{-1}$ for all $i$. Define $|g|:=n$, the length of the reduced form of $g$. Define a distance on $\FF$ by $d(g_1,g_2):=|g_1^{-1}g_2|$.

The boundary $\partial \FF$ is the subspace of all sequences $\xi=(s_1,s_2,\ldots) \in S^\NN$ such that $s_{i+1}\ne s_i^{-1}$ for all $i\ge 1$. It is naturally endowed with a Markov probability measure $\nu$ determined as follows. Let $(s_1,\ldots, s_n) \in S^n$ be an arbitrary sequence such that  $s_{i+1}\ne s_i^{-1}$ for all $i\ge 1$. Then 
$$\nu(\{ \xi \in \partial \FF:~ \xi_i=s_i~\forall 1\le i \le n\}) = (2r)^{-1}(2r-1)^{-(n-1)}.$$
There is a natural action of $\FF$ on $\partial \FF$ by 
$$(t_1\cdots t_n)(s_1,s_2,\ldots) = (t_1,\ldots,t_{n-k},s_{k+1},s_{k+2}, \ldots)$$ where $t_1,\ldots, t_n \in S$,  $g=t_1\cdots t_n \in \FF$ is in reduced form and $k$ is the largest number $\le n$ such that $s_i^{-1} = t_{n+1-i}$ for all $i\le k$.

The action preserves the measure class of $\nu$. For any $\xi \in \partial \FF$, the set 
$$H_\xi=\left\{g\in \FF:~ \frac{d\nu \circ g^{-1}}{d\nu}(\xi) = 1\right\}$$
is the {\em horosphere} centered at $\xi$ passing through the identity element $e$. It can alternatively be described as the set of all elements $g \in \FF$ such that for some $n=n(g)>0$, $(g^{-1}\xi)_i=\xi_i$ for all $i>n$ (i.e., $g^{-1}$ preserves the `tail' of $\xi$, from some point onwards).

 We let $\cR_0 \subset \partial \FF \times \partial \FF$ be the equivalence relation given by $\xi \sim_{\cR} \xi^\prime$ if and only if there is an $n>0$ such that $\xi_i =\xi_i^\prime$ for all $i>n$. In other words, $\xi \sim_{\cR_0} \xi^\prime$ if there exists $g\in H_\xi$ such that $g^{-1}\xi=\xi^\prime$. 
 
Now let $(X, \cB,\lambda)$ be a standard $\sigma$-finite measure space on which $\FF$ acts by non-singular transformations. Let $\cR_0(X) \subset X\times \partial \FF \times X \times \partial \FF$ be the equivalence relation under which $(x,\xi) $ is equivalent to $(y,\xi^\prime)$ if and only if there is a $g\in H_\xi$ such that $(y,\xi^\prime)=(g^{-1}x,g^{-1}\xi)$. 

Our main theorem is:
\begin{thm}\label{thm:main-0}
If $u,v \in L^1(X\times \partial \FF)$, $v > 0$ then the ratios
$$\RATIO_{2n}[u,v](x,\xi):= \frac{ \sum_{g^{-1} \in H_\xi, |g|\le 2n} \hT^gu(x,\xi)}{  \sum_{g^{-1} \in H_\xi, |g|\le 2n} \hT^gv(x,\xi)}$$
converge pointwise almost everywhere as $n\to \infty$ to a $\cR_0(X)$-invariant function $r(u,v)$ (which, if $\cR_0(X)$ is ergodic equals  $\frac{\int u~d\lambda \times \nu}{\int v ~d\lambda\times \nu}$).
\end{thm}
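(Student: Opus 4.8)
The plan is to reinterpret the horospherical averages as ergodic averages over a \emph{hyperfinite} equivalence relation and then to reduce the assertion to a reverse martingale convergence theorem. First I would set up $\cR_0(X)$ on $(X\times\partial\FF,\lambda\times\nu)$ carefully. For $\nu$-a.e.\ $\xi$ the $\FF$-stabilizer of $\xi$ is trivial (the eventually periodic boundary points form a $\nu$-null countable set), so on a conull set each $\cR_0(X)$-class is in canonical bijection with $\{g^{-1}:g^{-1}\in H_\xi\}$, and $\cR_0(X)$ is a non-singular countable measured equivalence relation whose Radon--Nikodym cocycle is the restriction of the $\FF$-cocycle $g\mapsto \frac{dg(\lambda\times\nu)}{d(\lambda\times\nu)}$ that defines $\hT$.

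The key combinatorial observation is that word length restricted to a horosphere through $e$ is governed by an ultrametric: for $\nu$-a.e.\ $\xi$, an element $g$ with $g^{-1}\in H_\xi$ has even length $2k$ where $k=\max\{i:(g^{-1}\xi)_i\neq\xi_i\}$ is the last coordinate in which $g^{-1}\xi$ differs from $\xi$, and for tail-equivalent $\xi,\xi',\xi''$ one has $\mathrm{lev}(\xi,\xi'')\le\max(\mathrm{lev}(\xi,\xi'),\mathrm{lev}(\xi',\xi''))$. Consequently the ``balls'' $B_n(x,\xi):=\{(g^{-1}x,g^{-1}\xi):g^{-1}\in H_\xi,\ |g|\le 2n\}$ are exactly the classes of an increasing sequence $\cR_n(X)\nearrow\cR_0(X)$ of finite equivalence subrelations; in particular $\cR_0(X)$ is hyperfinite, hence amenable, and the family $\{B_n(x,\xi)\}$ is laminar (any two members are nested or disjoint), which is precisely the strong Besicovich/Vitali covering property required to feed into the abstract ratio theorem.

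With this in hand I would rewrite the ratio explicitly. Set $\mu=\lambda\times\nu$ and $\mu_v=v\,d\mu$, a \emph{finite} measure since $v\in L^1$ and $v>0$. Using $\hT^gv(x,\xi)=v(g^{-1}x,g^{-1}\xi)\,\frac{dg\mu}{d\mu}(x,\xi)$ together with $\frac{d\mu_v}{d\mu}=v$, a short computation shows that the weights $\hT^gv(x,\xi)\big/\sum_{g}\hT^gv(x,\xi)$ appearing in $\RATIO_{2n}[u,v]$ coincide with the conditional law on the class $B_n(x,\xi)$ determined by the Radon--Nikodym cocycle of $\cR_0(X)$ with respect to $\mu_v$; hence
$$\RATIO_{2n}[u,v]=\EE_{\mu_v}\!\left[\tfrac{u}{v}\ \big|\ \cB_n\right],$$
where $\cB_n$ is the $\sigma$-algebra of $\cR_n(X)$-invariant sets. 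Since $\cR_n(X)$ increases to $\cR_0(X)$, the $\cB_n$ decrease to the $\sigma$-algebra $\cI$ of $\cR_0(X)$-invariant sets, and Lévy's downward (reverse) martingale theorem for the finite measure $\mu_v$ yields pointwise a.e.\ (and $L^1(\mu_v)$) convergence of $\RATIO_{2n}[u,v]$ to $r(u,v)=\EE_{\mu_v}[u/v\mid\cI]$, which is $\cR_0(X)$-invariant; when $\cR_0(X)$ is ergodic, $\cI$ is trivial and the limit is $\int u\,d\mu\big/\int v\,d\mu$.

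The part I expect to be the main obstacle is not the convergence step — once the reduction is in place it is a soft martingale argument, and the infinite-measure subtlety of classical Hopf-type theorems is dissolved by passing to the finite measure $\mu_v$ — but rather the structural groundwork: establishing measurability and non-singularity of $\cR_0(X)$ uniformly over the a.e.-defined fibers, proving the ultrametric/laminar property precisely (including the bookkeeping that $|g|=2\,\mathrm{lev}(\xi,g^{-1}\xi)$ and that the canonical horospherical witness is unique off the periodic points), and verifying that the conditional-expectation identity holds with the correct cocycle on the merely $\sigma$-finite space $X\times\partial\FF$. If instead one prefers to deduce Theorem~\ref{thm:main-0} as a formal corollary of the general ratio ergodic theorem for amenable equivalence relations, all the work is in confirming that the exhaustion $\{B_n\}$ meets that theorem's hypotheses, and laminarity is exactly what is needed there.
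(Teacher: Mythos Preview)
Your proposal is correct and takes a genuinely different route from the paper. The paper deduces Theorem~\ref{thm:main-0} from its abstract Theorem~\ref{thm:pointwise}, whose proof follows the classical Hopf pattern: first a dense subspace of ``good'' functions is produced via extreme asymptotic invariance (Theorem~\ref{thm:dense}), then a weak-type $(1,1)$ maximal inequality is proved using the extreme Besicovich property (Theorem~\ref{thm:maximal}), and the two are combined. Your argument bypasses all of this: once you observe that the horospherical balls $B_n(x,\xi)$ are the classes of an increasing chain of finite subrelations $\cR_n(X)\nearrow\cR_0(X)$, the identity $\RATIO_{2n}[u,v]=\EE_{\mu_v}[u/v\mid\cB_n]$ (which is indeed a short cocycle computation, using $D_v(b',b)=D(b',b)\,v(b')/v(b)$) reduces everything to the reverse martingale theorem for the \emph{finite} measure $\mu_v$. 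This is more elementary and more direct for the specific statement; in particular it needs no maximal inequality and no explicit use of extreme asymptotic invariance (the latter is in fact automatic once one has a nested exhaustion by finite subrelations). What the paper's approach buys in return is a reusable abstract theorem together with the maximal inequalities (Theorems~\ref{thm:maximal} and \ref{maxLp}) as independent results, and it does not require the $\cF_n$ to be nested. Your caution about the ``structural groundwork'' is reasonable but these verifications are routine here: the ultrametric identity $|g|=2\,\mathrm{lev}(\xi,g^{-1}\xi)$ for $g^{-1}\in H_\xi$ and the freeness of the action off the countable set of eventually periodic $\xi$ are exactly what the paper records in \S\ref{sec:horospheres}, and the conditional-expectation identity holds because $u/v\in L^1(\mu_v)$ (since $\int|u/v|\,d\mu_v=\|u\|_{L^1(\mu)}$) and $\mu_v$ is finite.
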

It is an open question whether the analogous statement with the condition $|g| \le 2n$ replaced by $|g|=2n$ holds true.

In the case when $(X,\cB,\lambda)$ is a probability space and the action is measure-preserving, we proved in [BN1] that the averages
$$ \frac{1}{\#\{g^{-1} \in H_\xi, |g|\le 2n\}} \sum_{g^{-1} \in H_\xi, |g|\le 2n} \hT^gu(x,\xi)$$
converge almost everywhere to $\EE_{\lambda\times\nu}[u|\cR_0(X)](x,\xi)$ which is the conditional expectation of $u$ on the $\sigma$-algebra of $\cR_0(X)$-saturated set. Moreover, if $u(x,\xi)=u(x)$ is a function of the first argument only then $\EE_{\lambda\times\nu}[u|\cR_0(X)](x,\xi)$ is equal to $\EE_\lambda[u|\FF^2]$, the conditional expectation of $u$ on the $\sigma$-algebra of $\FF^2$-invariant sets (where $\FF^2 < \FF$ is the index $2$ subgroup consisting of all $g\in \FF$ with $|g|$ even). In particular if $\FF^2$ acts ergodically on $(X,\cB,\lambda)$ then the above averages converge pointwise a.e. to the integral $\int u(x)~dx$. By averaging the above averages over the whole boundary we obtained pointwise ergodic theorems for uniform spherical averages in free groups [BN1].

In the general case, when $(X,\cB,\lambda)$ is $\sigma$-finite and the action is merely non-singular then there are examples (provided in \S \ref{sec:ergodic}) showing that $\EE_{\lambda\times\nu}[u|\cR_0(X)]$ does not necessarily equal $\EE_\lambda[u|\FF^2]$. Many interesting cases remain open. For example, it is not known whether $\EE_{\lambda\times\nu}[u|\cR_0(X)]=\EE_\lambda[u|\FF^2]$ in the special case when $X$ is the isometry group of Euclidean $n$-space and $\phi:\FF \to X$ is a homomorphism onto a dense subgroup and the action $(T^g)_{g\in \FF}$ is given by $T^gx =\phi(g)x$.


\subsection{Organization of the paper}

To begin, in \S \ref{sec:er} we prove a ratio ergodic theorem for measured equivalence relations with respect to averages on subsets satisfying some extreme invariance properties. It seems likely that the hypotheses in this result can be relaxed. Next we show in \S 3 that the horospherical equivalence relation described above have natural subsets that satisfy the hypotheses of \S 2. This implies our main result for the free group. In \S \ref{sec:ergodic} we exhibit some examples pertaining to the issue of whether ergodicity of $\FF \cc (X,\cB,\lambda)$ implies ergodicity of the relation $\cR_0(X)$.

{\bf Acknowledgements}. L. B. would like to thank Mike Hochman for useful conversations. 

\section{A ratio ergodic theorem for amenable equivalence relations}\label{sec:er} 


Consider a standard $\sigma$-finite measure space $(B,\nu)$ with a Borel equivalence relation $\cR \subset B\times B$. We assume that $\cR$ is discrete and quasi-invariant (with respect to $\nu$). Discrete means that for each $b\in B$, the equivalence class $[b]$ of $b$ is at most countable. Quasi-invariant means that for any Borel $A \subset B$ with $\nu(A)=0$, its saturation $[A]:=\cup_{a \in A} [a]$ also has $\nu$-measure zero. Integrating the counting measures on the fibers of the {\em left projection} $(x,y) \in \cR \mapsto x$ gives the {\em left counting measure} $M$ on $\cR$ satisfying $dM(x,y)=d\nu(x)$. The {\em right counting measure} ${\check M}$ is defined by $d{\check M}(x,y)=dM(y,x)$. Because $\nu$ is quasi-invariant, these two measures are equivalent and $D(x,y):=\frac{dM}{d{\check M}}(x,y)$ is the {\em Radon-Nikodym cocycle}. Thus the cocycle identity $D(x,z)=D(x,y)D(y,z)$ is satisfied, for almost all $x,y,z\in B$..

 Suppose that $\cF=\{\cF_n\}_{n=1}^\infty$ is a sequence of measurable functions $\cF_n:B \to 2_\f^B$ (where $2_\f^B$ denotes the space of finite subsets of $B$) such that $\cF_n(b)$ is a subset of the $\cR$-equivalence class of $b$ for every $b$. We will need the following definitions.

\begin{enumerate}
\item Let $\Inn(\cR)$ denote the group of {\em inner automorphisms} of $\cR$. These are invertible Borel maps $\phi:B \to B$ with graph contained in $\cR$. A set $\Phi \subset \Inn(\cR)$ {\em generates $\cR$} if for all $(b_1,b_2) \in \cR$ there exists $\phi \in \langle \Phi \rangle$ such that $\phi(b_1)=b_2$ (where $\langle \Phi \rangle$ denotes the group generated by $\Phi$).

\item $\cF$ is {\em extremely asymptotically invariant} if $\lim_{n\to\infty} |\cF_n(b)| = +\infty$ for $\nu$-a.e. $b$ and there exists a countable generating set $\Phi \subset \Inn(\cR)$ such that for every $\phi \in \Phi$ and $b\in B$ there exists an $N=N(\phi,b)$ such that $n>N$ implies $\cF_n(b)=\phi(\cF_n(b))$.

\item $\cF$ satisfies {\em the extreme Besicovich property} if for every $(b,b') \in \cR$ and any $n\ge 0$, either $\cF_n(b) = \cF_n(b')$ or $\cF_n(b) \cap \cF_n(b') = \emptyset$. Moreover, we require that there exists an $N=N(b,b')$ such that $n\ge N$ implies $\cF_n(b)=\cF_n(b')$. 

\item $\cF$ is {\em anchored} if $b \in \cF_n(b)$ for every $n$.
 
\end{enumerate}

For $u,v \in L^p(B)$ with $\int v ~d\nu \ne 0$ consider the sums $\SUM^\cF_n[u] \in L^p(B)$ and ratios $\RATIO^\cF_n[u,v]$ defined by
\begin{eqnarray*}
\SUM^\cF_n[u](b)&:=& \sum_{b'\in\cF_n(b)} u(b')D(b',b),\\
\RATIO^\cF_n[u,v](b)&:=&\frac{\SUM^\cF_n[u](b)}{ \SUM^\cF_n[v](b)}.
\end{eqnarray*}
For $u \in L^1(B,\nu)$ let $\EE_\nu[u|\cR]$ denotes the conditional expectation of $u$ on the $\sigma$-algebra of $\cR$-invariant sets with respect to the measure $\nu$. 

The main result of this section is:
\begin{thm}\label{thm:pointwise}
If $\cF$ is extremely asymptotically invariant, anchored and satisfies the extreme Besicovich property then $\cF$ is a pointwise ratio ergodic sequence in $L^1$. Namely, for every $u,v\in L^1(B,\nu)$, with $v > 0$ on $B$, $\RATIO^\cF_n[u,v]$ converges pointwise almost everywhere as $n\to\infty$ to a function $r(u,v)$ on $B$ satisfying
\begin{itemize}
\item  $r(u,v)$ is an $\cR$-invariant function, 
\item $\int f\cdot r(u,v)v~d\nu = \int f \cdot u ~d\nu$ for any $f\in L^\infty(B)$ which is $\cR$-invariant.
\end{itemize}
In particular, if  $\cR$ is ergodic then $r(u,v)$ equals the constant $\frac{\int u~d\nu}{\int v ~d\nu}$ almost everywhere. In general,
$$r(u,v) = \EE_{\nu_v}\left[\frac{u}{v}\Big|\cR\right]$$
is the conditional expectation of $\frac{u}{v}$ on the $\sigma$-algebra of $\cR$-invariant sets with respect to the measure $\nu_v$ defined by $d\nu_v=vd\nu$.
 \end{thm}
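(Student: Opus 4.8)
The plan is to reduce the ratio theorem to the structure of the partition induced by $\cF_n$ and to the known Hopf-style machinery for a single transformation, exploiting the two extreme hypotheses in an essential way. The key observation is that the extreme Besicovich property says that, for each fixed $n$, the sets $\{\cF_n(b) : b \in B\}$ form a \emph{partition} of $B$ into finite $\cR$-classes-pieces, and that this sequence of partitions is \emph{decreasing in the refinement order only eventually}: for $n \ge N(b,b')$ the pieces through $b$ and $b'$ coincide, so the partitions $\cP_n := \{\cF_n(b)\}$ become coarser and coarser and their limit is (up to null sets) the partition into $\cR$-classes. So the first step is to make this precise: define $\cP_n$, check it is a genuine measurable partition of $(B,\nu)$ into finite sets, and show $\bigvee$-limit (actually $\bigwedge$-limit, the tail) of $\cP_n$ equals the $\cR$-invariant $\sigma$-algebra $\cI_\cR$. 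Anchoredness guarantees $b \in \cF_n(b)$, so the pieces genuinely exhaust $B$ and no mass escapes.

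Next I would rewrite the ratio in terms of these partitions and the Radon–Nikodym cocycle. Because $dM(x,y) = d\nu(x)$ and $D(x,y) = dM/d\check M$, for any $w \in L^1(B,\nu)$ one has the crucial identity
\[
\int_B \SUM^\cF_n[w](b)\, d\nu(b) \;=\; \int_B \Big(\sum_{b' \in \cF_n(b)} w(b') D(b',b)\Big) d\nu(b) \;=\; \int_B w\, d\nu ,
\]
obtained by unfolding over $\cR$ with $M$ and folding back with $\check M$ — that is, $\SUM^\cF_n$ is an $L^1$-norm-preserving (on nonnegative functions) averaging-type operator, in fact a conditional-expectation-like operator attached to $\cP_n$ once we renormalize by $\SUM^\cF_n[\mathbf 1]$. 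Concretely I expect that $\RATIO^\cF_n[u,v] = \EE_{\nu_v}\!\big[\tfrac{u}{v}\,\big|\,\cP_n\big]$ where $\nu_v = v\,d\nu$ and $\cP_n$ is viewed as a $\sigma$-subalgebra: the cocycle weights are exactly what is needed to turn the raw fiber sum into the $\nu_v$-conditional expectation on the finite blocks $\cF_n(b)$. Verifying this identity — that the cocycle-weighted fiber sum over a block of $\cP_n$, divided by the same sum applied to $v$, equals the $\nu_v$-average of $u/v$ over that block — is the computational heart and should follow from the definition of $D$ and quasi-invariance.

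Given that identification, the theorem becomes a martingale convergence statement: $\RATIO^\cF_n[u,v] = \EE_{\nu_v}[u/v \mid \cP_n]$ with $\cP_n$ a \emph{reversed} (coarsening) sequence of $\sigma$-algebras whose intersection is $\cI_\cR$, so by the reverse martingale convergence theorem (applied in $L^1(\nu_v)$, valid since $u/v \in L^1(\nu_v)$ because $u \in L^1(\nu)$) the ratios converge $\nu_v$-a.e., hence $\nu$-a.e. since $v>0$, to $\EE_{\nu_v}[u/v \mid \cI_\cR] =: r(u,v)$. The two listed properties are then immediate: $r(u,v)$ is $\cR$-invariant by construction, and for $\cR$-invariant $f \in L^\infty$ we get $\int f\, r(u,v)\, v\, d\nu = \int f\, \EE_{\nu_v}[u/v \mid \cI_\cR]\, d\nu_v = \int f\, (u/v)\, d\nu_v = \int f u\, d\nu$; the ergodic case is the special case $\cI_\cR$ trivial. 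I expect the main obstacle to be Step one together with the measurability bookkeeping: proving that the extreme Besicovich and extremely-asymptotically-invariant hypotheses genuinely force $\bigcap_n \sigma(\cP_n) = \cI_\cR$ up to $\nu$-null sets — in particular that every $\cR$-invariant set is, mod null, measurable with respect to every $\cP_n$ for $n$ large, which is where the generating set $\Phi \subset \Inn(\cR)$ and the eventual $\phi$-invariance $\cF_n(b) = \phi(\cF_n(b))$ must be invoked to see that $\sigma(\cP_n)$ decreases exactly to the $\Phi$-invariant, i.e. $\cR$-invariant, sets — and establishing that $\SUM^\cF_n$ really is the claimed conditional expectation rather than merely comparable to it. Everything after that is a soft application of reverse martingales.
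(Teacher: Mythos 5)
Your reduction of the ratio to a conditional expectation is essentially sound: by anchoredness plus the equal-or-disjoint clause, for each fixed $n$ the sets $\cF_n(b)$ are the classes of a finite Borel sub-equivalence relation $\cS_n\subset\cR$, and the mass-transport computation with $dM=D\,d{\check M}$ does verify that $\RATIO^\cF_n[u,v]=\EE_{\nu_v}\left[\frac{u}{v}\mid\cB_n\right]$, where $\cB_n$ must be taken to be the $\sigma$-algebra of $\cS_n$-\emph{saturated} Borel sets (the naively generated $\sigma(\cP_n)$ is the wrong object, since the blocks are null sets). Moreover $\bigcap_n\cB_n=\cI_\cR$ does follow, pointwise, from the eventual-equality clause of the extreme Besicovich property alone. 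The genuine gap is the monotonicity you assert when you say the partitions ``become coarser and coarser'': reverse martingale convergence needs $\cB_{n+1}\subseteq\cB_n$ for all $n$, i.e.\ each $\cF_{n+1}$-block must be a union of $\cF_n$-blocks, and no hypothesis of Theorem \ref{thm:pointwise} gives this. Eventual merging of the blocks of any two \emph{fixed} equivalent points does not force the partition at time $n+1$ to coarsen the partition at time $n$. For instance, on the tail relation of a sequence space take $\cF_n(\xi)=\{\eta:\ \eta_k=\xi_k\ \forall k\ge d_n\}$ with $d_n\to\infty$ but $d_n$ non-monotone: this is anchored, satisfies the extreme Besicovich property, and is extremely asymptotically invariant (via finite-order inner automorphisms), yet the $\sigma$-algebras $\cB_n$ oscillate rather than decrease. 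For non-monotone sequences of $\sigma$-algebras, almost-everywhere convergence of conditional expectations can fail in general, so the ``soft application of reverse martingales'' is exactly the step that is not available under the stated hypotheses.

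By contrast, the paper's proof uses no monotonicity at all: it establishes convergence on the dense subspace spanned by $wv$ ($w$ bounded $\cR$-invariant) and the coboundary-type functions $u_\phi(b)=u(b)-u(\phi(b))D(\phi(b),b)$, whose sums vanish for large $n$ by extreme asymptotic invariance, and then transfers this to all of $L^1$ via a weak-type $(1,1)$ ratio maximal inequality proved by a covering/disjointification argument resting on anchoredness and the equal-or-disjoint clause. Your martingale route would become a correct (and arguably cleaner) proof if one adds the hypothesis $\cF_n(b)\subseteq\cF_{n+1}(b)$ — which does hold for the horospherical sets $\cB_n$ of \S 3, so it would recover Theorem \ref{thm:main-0} — but it does not prove Theorem \ref{thm:pointwise} in the stated generality; either add nestedness or supply a maximal-inequality argument of the paper's type.
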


We would like to point out that the hypotheses are certainly not necessary. For example, the classical ratio ergodic theorem for averages along intervals of $\ZZ$ could be phrased in the language of equivalence relations. But expanding intervals in $\ZZ$ are neither extremely asymptotically invariant nor do they satisfy the extreme Besicovich property. It is an open problem to determine general hypotheses on $\cF$ guaranteeing that it is a ratio ergodic sequence.

The proof of Theorem \ref{thm:pointwise} follows a classical recipe: we first prove that it holds true for fixed $v$ and for $u$ in a naturally defined dense subset of $L^1$. Then with the aid of a maximal inequality, we show that it holds for all $u$ in $L^1$. To be precise:

\begin{thm}[Dense set of good functions]\label{thm:dense}
If $\cF$ is extremely asymptotically invariant then, given $ v\in L^1(B)$ with $v >0$ on $B$, there exists a norm dense subspace $\cL_v \subset L^1(B)$ such that for all $u \in \cL_v$, $\RATIO^\cF_n[u,v]$ converges pointwise almost everywhere. 
\end{thm}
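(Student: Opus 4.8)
The plan is to produce the dense subspace $\cL_v$ explicitly as the span of two natural families of functions: the "$\cR$-invariant" functions (on which the ratio is trivial) and "coboundary-type" functions arising from the inner automorphisms in the generating set $\Phi$. More precisely, I would set
$$\cL_v := \set{ g\cdot v : g \in L^\infty(B), \ g \text{ is } \cR\text{-invariant} } + \Span\set{ \psi - \psi\circ\phi^{-1}\cdot D(\phi^{-1}(\cdot),\cdot)^{-1}\text{-type terms} : \phi \in \Phi, \ \psi \in \cV },$$
where $\cV$ is a convenient dense class (say bounded functions of bounded support, or functions of the form $h\cdot v$ with $h\in L^\infty$). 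The first summand is handled immediately: if $u = gv$ with $g$ $\cR$-invariant, then since $D$ is a cocycle and $g$ is constant on equivalence classes, $\SUM^\cF_n[u](b) = g(b)\,\SUM^\cF_n[v](b)$, so $\RATIO^\cF_n[u,v] \equiv g$ for \emph{every} $n$; convergence is automatic. The real content is the second summand, where I want to choose the generators so that the associated sums telescope.

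The key mechanism is this. Fix $\phi\in\Phi$. The correct "coboundary" attached to $\phi$ and a test function $\psi$ is the function $u_{\phi,\psi}$ defined (up to the right Radon--Nikodym weight) so that
$$\SUM^\cF_n[u_{\phi,\psi}](b) \;=\; \sum_{b'\in\cF_n(b)}\psi(b')D(b',b) \;-\; \sum_{b'\in\phi(\cF_n(b))}\psi(b')D(b',b).$$
Because $\cF$ is extremely asymptotically invariant, there is $N=N(\phi,b)$ such that $\cF_n(b)=\phi(\cF_n(b))$ for all $n>N$; hence the right-hand side is \emph{identically zero} for $n>N$, and in particular $\SUM^\cF_n[u_{\phi,\psi}](b)\to 0$. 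Combined with the fact (from extreme asymptotic invariance) that $|\cF_n(b)|\to\infty$ and, via the Besicovich/anchored structure, that $\SUM^\cF_n[v](b)$ grows (it is a sum of $|\cF_n(b)|$ strictly positive terms and is eventually "nested"), one gets $\RATIO^\cF_n[u_{\phi,\psi},v](b)\to 0$ pointwise a.e. The algebra to check is just that $u_{\phi,\psi}$, written out, is a genuine $L^1$ function — it is $\psi$ minus a transported-and-reweighted copy of $\psi$, so $\norm{u_{\phi,\psi}}_1 \le 2\norm{\psi}_1$ after using that $\phi$ is an inner automorphism and changing variables with the cocycle $D$. Linearity of $\SUM^\cF_n[\cdot]$ then gives pointwise convergence of $\RATIO^\cF_n[u,v]$ for every $u\in\cL_v$.

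It remains to prove density of $\cL_v$ in $L^1(B,\nu)$. Here I would argue by duality: suppose $w\in L^\infty(B)$ annihilates $\cL_v$, i.e. $\int w u\,d\nu = 0$ for all $u\in\cL_v$. Annihilating all the coboundary terms $u_{\phi,\psi}$ forces, after undoing the change of variables, that $w$ is invariant under every $\phi\in\Phi$ up to the Radon--Nikodym cocycle; since $\Phi$ generates $\cR$ (by the extreme asymptotic invariance hypothesis), $w$ is (a version of) an $\cR$-invariant function. But then annihilating the first summand $\set{gv : g \ \cR\text{-invariant}, g\in L^\infty}$ gives $\int w g v\,d\nu = 0$ for all such $g$; taking $g = \operatorname{sign}(w)$ (which is $\cR$-invariant since $w$ is) and using $v>0$ yields $\int |w| v\,d\nu = 0$, hence $w=0$ $\nu$-a.e. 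By Hahn--Banach, $\cL_v$ is dense. The main obstacle I anticipate is the measurability/bookkeeping in the duality step — precisely, checking that "$\phi$-invariance up to the cocycle for every $\phi$ in a countable generating set $\Phi$" genuinely implies $\cR$-invariance $\nu$-a.e. (this uses quasi-invariance of $\nu$ and the cocycle identity for $D$), and making sure the transported test functions $\psi\circ\phi^{-1}$ stay inside a single fixed dense class so that the span is honestly dense before passing to the kernel. Everything else is linearity of $\SUM^\cF_n$ plus the two structural properties of $\cF$.
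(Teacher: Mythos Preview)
Your approach is essentially the paper's: define $\cL_v$ as the span of $\{wv : w\in L^\infty(B)^\cR\}$ together with the ``coboundaries'' $u_\phi(b)=u(b)-u(\phi(b))D(\phi(b),b)$ for $\phi\in\Phi$, observe that $\RATIO^\cF_n[wv,v]=w$ identically while $\SUM^\cF_n[u_\phi](b)=0$ for $n>N(\phi,b)$ by extreme asymptotic invariance, and prove density via duality (an annihilator $k\in L^\infty$ must be $\Phi$-invariant hence $\cR$-invariant, so $kv\in\cL_v$ and $\int k^2 v\,d\nu=0$ forces $k=0$).

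One correction: your worry about the denominator is misplaced and invokes hypotheses you do not have. Theorem~\ref{thm:dense} assumes only extreme asymptotic invariance, not the anchored or Besicovich properties, so you cannot appeal to nestedness of $\cF_n(b)$. Fortunately you do not need to: since $v>0$ and $D>0$ a.e., $\SUM^\cF_n[v](b)>0$ for every $n$, and since $\SUM^\cF_n[u_\phi](b)$ is \emph{identically zero} for all $n>N(\phi,b)$ (not merely tending to zero), the ratio is eventually zero regardless of how the denominator behaves. Also, after the change of variables the annihilator $w$ satisfies $w=w\circ\phi^{-1}$ a.e.\ on the nose, not ``up to the cocycle''; countability of $\Phi$ then gives $\cR$-invariance directly.
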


Let $M^\cF_n[u,v] := \sup_n |\RATIO^\cF_n[u,v]|$. We will prove the following weak-type $(1,1)$ maximal inequality. 

\begin{thm}[$L^1$ maximal inequality]\label{thm:maximal}
Suppose that $\cF$ is anchored and satisfies the extreme Besicovich property. Then for any $u,v \in L^1(B)$ with $v \ge 0$ (and $v$ not identically zero) and any $\epsilon>0$,
$$\nu_v\Big( \big\{b \in B:~ M^\cF[u,v](b)\ge \epsilon \big\}\Big) \le  \frac{1}{\epsilon}\int_{\set{M^\cF[u,v](b)>\epsilon}}u(b)~d\nu(b) \le \frac{\norm{u}_{L^1(\nu)}}{\epsilon}$$
where $\nu_v$ is the measure $d\nu_v=vd\nu$.

\end{thm}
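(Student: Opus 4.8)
The plan is to prove this by a Vitali-type covering argument adapted to the extreme Besicovich property, which is the natural analogue of the Hopf maximal inequality / the rising-sun lemma in this setting. Fix $u, v \in L^1(B)$ with $v \ge 0$, and set $E_\epsilon := \{b \in B : M^\cF[u,v](b) \ge \epsilon\}$. For each $b \in E_\epsilon$ choose (measurably, using a selection argument) an index $n = n(b)$ with $\RATIO^\cF_{n(b)}[u,v](b) \ge \epsilon$, i.e. $\SUM^\cF_{n(b)}[u](b) \ge \epsilon\, \SUM^\cF_{n(b)}[v](b)$. The collection $\{\cF_{n(b)}(b) : b \in E_\epsilon\}$ is a family of finite subsets of $B$, each lying in a single $\cR$-class, and by the extreme Besicovich property any two of these sets are \emph{either equal or disjoint}. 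Hence this family, once we discard repetitions, is an honest \emph{partition} of its union $U := \bigcup_{b \in E_\epsilon} \cF_{n(b)}(b)$ into pairwise disjoint finite blocks; and since $\cF$ is anchored, $b \in \cF_{n(b)}(b)$, so $E_\epsilon \subset U$.

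Next I would estimate measures block by block. Let $\{C_j\}_j$ enumerate the distinct blocks, so $C_j = \cF_{n_j}(b_j)$ for a representative $b_j$, and the $C_j$ are pairwise disjoint with $E_\epsilon \subset \bigsqcup_j C_j$. The key identity is the "transfer of mass" formula
\[
\int_{C_j} \SUM^\cF_{n_j}[w](b)\, d\nu(b) \;=\; \sum_{b \in C_j}\; \sum_{b' \in \cF_{n_j}(b)} w(b') D(b',b),
\]
but since all $b \in C_j$ give the same set $\cF_{n_j}(b) = C_j$ (Besicovich again), the inner sum is over $b' \in C_j$ for every $b$, and using the cocycle identity $D(b',b) = D(b',b_j)/D(b,b_j)$ together with $d\nu(b) = D(b,b_j)\, d\check{M}$-type bookkeeping — more cleanly, one observes that summing $\sum_{b,b'\in C_j} w(b')D(b',b) \cdot (\text{appropriate weight})$ telescopes to $\sum_{b'\in C_j} w(b')$ weighted correctly. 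The clean statement I want is: for any $w \in L^1$,
\[
\int_{B}\SUM^\cF_{n}[w]\,\mathbf{1}_{\{b : \cF_n(b)=C_j\}}\, d\nu \;=\; \int_{C_j} w \, d\nu,
\]
because $\SUM^\cF_n[w](b) = \sum_{b'\in C_j} w(b')D(b',b)$ is, as a function of $b\in C_j$, exactly the density making $\int_{C_j}(\cdot)\,d\nu = \sum_{b'\in C_j} w(b')D(b',b_j)/\!\sum\!(\cdots)$ — this is the standard fact that the Radon–Nikodym cocycle lets one push mass around a finite orbit-block while preserving the integral against $\nu$. Granting this, for each block $C_j$ we have, from $\SUM^\cF_{n_j}[u](b_j) \ge \epsilon\,\SUM^\cF_{n_j}[v](b_j)$ holding on all of $C_j$ (the sums are constant-up-to-the-$D$-factor, and the $D$-factors cancel in the ratio),
\[
\int_{C_j} v \, d\nu \;=\; \int_{B} \SUM^\cF_{n_j}[v]\,\mathbf{1}_{C_j\text{-block}}\, d\nu \;\le\; \frac{1}{\epsilon}\int_{B}\SUM^\cF_{n_j}[u]\,\mathbf{1}_{C_j\text{-block}}\, d\nu \;=\; \frac{1}{\epsilon}\int_{C_j} u \, d\nu.
\]
Summing over the disjoint blocks $j$ and using $E_\epsilon \subset \bigsqcup_j C_j$:
\[
\nu_v(E_\epsilon) \;=\; \int_{E_\epsilon} v\,d\nu \;\le\; \sum_j \int_{C_j} v\, d\nu \;\le\; \frac{1}{\epsilon}\sum_j \int_{C_j} u\, d\nu \;=\; \frac{1}{\epsilon}\int_{\bigsqcup_j C_j} u\, d\nu.
\]
Finally, since $\bigsqcup_j C_j \subseteq \{b : M^\cF[u,v](b) > \epsilon\}$ up to the usual $\epsilon$-vs-$\epsilon'$ adjustment (each $C_j$ consists of points $b'$ which themselves see the block $C_j$ as some $\cF_{n_j}(b')$, hence $M^\cF[u,v](b') \ge \epsilon$), and since $u \ge 0$ can be assumed (replace $u$ by $u^+$; only that part contributes positively, and $\SUM$ is monotone in $u$ only after this reduction — more carefully, one runs the argument with $u$ replaced by $\max(u,0)$ on the selected blocks), we get
\[
\nu_v(E_\epsilon) \;\le\; \frac{1}{\epsilon}\int_{\{M^\cF[u,v] > \epsilon\}} u\, d\nu \;\le\; \frac{\|u\|_{L^1(\nu)}}{\epsilon},
\]
as claimed.

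The main obstacle I anticipate is the careful bookkeeping with the Radon–Nikodym cocycle $D$ in the "transfer of mass" step: one must verify that $\int_{C_j}\SUM^\cF_{n}[w]\,d\nu$ really equals $\int_{C_j} w\, d\nu$ (and not some twisted version), which hinges on the identity $dM(x,y) = d\nu(x)$, on $D = dM/d\check M$, and on the cocycle relation, applied to a finite block where all the fibers coincide. The disjointness/partition structure from the extreme Besicovich property is what makes the sum over blocks clean — that is the whole point of the hypothesis — so once that identity is pinned down, the rest is the classical Hopf-style summation. A secondary, minor point is the measurable selection of $n(b)$ and verifying that $\bigsqcup_j C_j$ is contained in the (open) sublevel set $\{M^\cF[u,v] > \epsilon'\}$ for the intermediate constant, handled as usual by taking $\epsilon' < \epsilon$ and then letting $\epsilon' \uparrow \epsilon$, or simply by noting $M^\cF$ is a supremum so the non-strict and strict sets differ controllably.
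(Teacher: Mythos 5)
The central gap is your claim that ``by the extreme Besicovich property any two of these sets are either equal or disjoint.'' The hypothesis only compares $\cF_n(b)$ and $\cF_n(b')$ at the \emph{same} index $n$; your selected sets $\cF_{n(b)}(b)$ carry varying indices, and nothing rules out partial overlap. In the motivating horospherical example one has $\cB_m(\xi)\subsetneq\cB_n(\xi)$ for $m<n$, so a selection assigning $n(b)=n$ to $b$ and $n(b')=m$ to some $b'\in\cB_m(\xi)$ produces two intersecting, unequal blocks. Without genuine disjointness your final step $\sum_j\int_{C_j}u~d\nu=\int_{\bigsqcup_j C_j}u~d\nu$ fails (for $u\ge 0$ the sum only dominates the integral, i.e.\ the inequality goes the wrong way), and the bounds $\frac{1}{\epsilon}\int_{\set{M^\cF[u,v]>\epsilon}}u~d\nu$ and $\norm{u}_{L^1(\nu)}/\epsilon$ are lost. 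The paper repairs exactly this point with a maximal (stopping-time) selection: it truncates, working with $M^\cF_T$ and $D_T=\set{M^\cF_T[u,v]>\epsilon}$, and defines $\rho(b)$ to be the \emph{largest} $n\le T$ for which the ratio exceeds $\epsilon$. Since $\RATIO^\cF_n[u,v]$ is constant on each block $\cF_n(b)$ (the factor $D(\cdot,b)$ cancels in the ratio, as you note), this maximal choice combined with anchoredness and the Besicovich property forces intersecting selected blocks to coincide and yields $E=D_T$; one then lets $T\to\infty$. An arbitrary measurable choice of $n(b)$ with ratio $\ge\epsilon$, as you propose, does not achieve this, and this is precisely where the Vitali-type ``greedy'' choice is needed.

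There is a second, structural problem with ``enumerate the distinct blocks $\{C_j\}_j$ and sum'': when $\nu$ is non-atomic there are uncountably many blocks and each block is a \emph{finite} set, hence of $\nu$-measure zero, so your key identity $\int_B\SUM^\cF_{n}[w]\,\mathbf{1}_{\set{b:\,\cF_n(b)=C_j}}~d\nu=\int_{C_j}w~d\nu$ reads $0=0$ and block-by-block summation cannot recover $\int_E v~d\nu$ or $\int_E u~d\nu$. What is needed is the measurable, global version of this bookkeeping, which is how the paper proceeds: it sets $F_u(b,b')=u(b)\,|\cF_{\rho(b)}(b)|^{-1}$ for $b'\in\cF_{\rho(b)}(b)$ (and $0$ otherwise), integrates over $\cR$ against the left and right counting measures $M$ and $\check M$ using $D=dM/d\check M$, and obtains $\int_E u~d\nu=\int_E|\cF_{\rho(b')}(b')|^{-1}\,\SUM^\cF_{\rho(b')}[u](b')~d\nu(b')$, and likewise for $v$; the pointwise inequality $\SUM^\cF_{\rho(b')}[u](b')\ge\epsilon\,\SUM^\cF_{\rho(b')}[v](b')$ on $E=D_T$ then gives $\int_{D_T}u~d\nu\ge\epsilon\int_{D_T}v~d\nu$. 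So your intended ``transfer of mass'' is right in spirit (it is a mass-transport computation on $\cR$), but as written the selection step and the block-summation step are both genuinely flawed; with the maximal stopping time and the $F_u$-style integration your outline becomes the paper's proof.
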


\subsection{A dense set of good functions}

\begin{lem}\label{u_phi}
For any $f \in L^1(\nu)$ and $\phi \in \Inn(\cR)$,
$$\int f(b)D(\phi b, b)~d\nu(b)=\int f(\phi^{-1}b)~d\nu(b).$$
\end{lem}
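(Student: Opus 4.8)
The plan is to deduce the identity from a Fubini-type (``mass transport'') computation on $\cR$, comparing the left and right counting measures $M$ and $\check M$ via the defining relation $D=dM/d\check M$. First I would record two reductions. Since $\nu$ is quasi-invariant and the graph of $\phi\in\Inn(\cR)$ is contained in $\cR$, the maps $\phi$ and $\phi^{-1}$ are non-singular, so $f\circ\phi^{-1}$ is well defined $\nu$-a.e.; and it suffices to treat $f\ge 0$, reading both sides in $[0,+\infty]$, because the general $f\in L^1(\nu)$ case then follows by linearity after splitting $f=f^+-f^-$ (the $f\ge0$ identity applied to $|f|$ shows the two sides are finite together).

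Assume then $f\ge0$ and define the Borel function $F\colon\cR\to[0,+\infty]$ by $F(x,y):=f(y)\,\mathbf{1}[\,x=\phi(y)\,]$; since the graph of $\phi$ lies in $\cR$ and $\cR$ is symmetric, $F$ is supported on $\{(\phi b,b):b\in B\}\subset\cR$. I would then evaluate $\int_\cR F\,dM$ in two ways. On one hand, $dM(x,y)=d\nu(x)$ means integrating, over $x\in B$, the counting measure on the fiber $\{y:(x,y)\in\cR\}$, and since $\phi$ is a bijection this fiber contains exactly one $y$ with $x=\phi(y)$, namely $y=\phi^{-1}(x)$; hence
\[
\int_\cR F\,dM=\int_B F\big(x,\phi^{-1}(x)\big)\,d\nu(x)=\int_B f\big(\phi^{-1}(x)\big)\,d\nu(x).
\]
On the other hand, $dM=D\,d\check M$ and $d\check M(x,y)=d\nu(y)$, so that integrating over $y\in B$ the counting measure on $\{x:(x,y)\in\cR\}$ — whose only point carrying $F$ is $x=\phi(y)$ — gives
\[
\int_\cR F\,dM=\int_\cR F\,D\,d\check M=\int_B F(\phi(y),y)\,D(\phi(y),y)\,d\nu(y)=\int_B f(y)\,D(\phi(y),y)\,d\nu(y).
\]
Equating the two expressions and relabeling the variable as $b$ yields the lemma.

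Everything here is bookkeeping with the definitions at the start of \S\ref{sec:er}; the only points meriting a line of care are that $\phi\in\Inn(\cR)$ forces $(\phi b,b)\in\cR$ for every $b$ (so $F$ really is a function on $\cR$ supported on the graph of $\phi$), that bijectivity of $\phi$ collapses each fiber sum to a single term, and the standard justification — via indicator functions and monotone convergence — that these manipulations of $\int_\cR F\,dM$ are legitimate, with $F\in L^1(M)$ precisely when the common value of the two integrals is finite. I do not expect a genuine obstacle: this is a preliminary change-of-variables identity, and the substance of the section lies in the dense-set and maximal-inequality theorems that follow.
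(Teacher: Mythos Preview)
Your proof is correct and follows essentially the same idea as the paper's: both integrate a function supported on the graph $\{(\phi b,b):b\in B\}\subset\cR$ against $M$ and against $D\,d\check M$, using the fiberwise descriptions $dM(x,y)=d\nu(x)$ and $d\check M(x,y)=d\nu(y)$ to reduce each integral to an integral over $B$. The paper does this in one terse line without naming the auxiliary function $F$, while you spell out the Fubini computation and the reduction to $f\ge 0$ explicitly; the content is the same.
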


\begin{proof}
Recall that $D(\phi b, b) ~d{\check M}(\phi b, b) = dM(\phi b, b)$. Therefore
\begin{eqnarray*}
\int f(b)D(\phi b, b)~d\nu(b) = \int f(b)D(\phi b, b) ~d{\check M}(\phi b, b) = \int f(b) ~dM(\phi b, b) = \int f(\phi^{-1} b) ~d\nu(b).
\end{eqnarray*}
\end{proof}

\begin{lem}\label{lem:u_phi}
Let $\cF$ be extremely asymptotically invariant and $\Phi \subset \Inn(\cR)$ be a countable generating set witnessing its extreme asymptotic invariance. For $\phi \in \Phi$ and $u \in L^1(B)$, define $u_\phi\in L^1(B)$ by
$$u_\phi(b) := u(b) - u(\phi(b)) D(\phi(b),b).$$
Then  $\SUM^\cF_n[u_\phi]$ converges pointwise almost everywhere to $0=\EE_\nu[u_\phi|\cR]$. Moreover the span of $\{u_\phi:~u\in L^1(B), \phi \in \Phi\}$ is norm dense in $L^1_0(B)$ (= the set of $w \in L^1(B)$ with $\EE_\nu[w|\cR]=0$). 
\end{lem}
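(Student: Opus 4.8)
The plan is to treat the two assertions separately. For the pointwise convergence of $\SUM^\cF_n[u_\phi]$ to $0$, the point is that $u_\phi$ is, by construction, a "coboundary'' for $\phi$ in a sense that is compatible with the weighting by the Radon--Nikodym cocycle $D$. First I would unwind the definition: for any $b$,
$$
\SUM^\cF_n[u_\phi](b) = \sum_{b'\in\cF_n(b)} \big(u(b') - u(\phi b')\,D(\phi b', b')\big)\,D(b',b).
$$
Using the cocycle identity $D(\phi b',b')D(b',b) = D(\phi b', b)$, the second sum becomes $\sum_{b'\in\cF_n(b)} u(\phi b')\,D(\phi b', b)$, i.e.\ the same sum over the set $\phi(\cF_n(b))$ with the $D$-weighting anchored at $b$. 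Now invoke extreme asymptotic invariance: for $n > N(\phi,b)$ we have $\phi(\cF_n(b)) = \cF_n(b)$, so the two sums coincide exactly and $\SUM^\cF_n[u_\phi](b) = 0$ for all large $n$. Hence the limit is $0$ for $\nu$-a.e.\ $b$, and since $\phi$ preserves the measure class and $u_\phi$ visibly has integral zero against any $\cR$-invariant $L^\infty$ function (by Lemma \ref{u_phi}, $\int f u_\phi\,d\nu = \int f u\,d\nu - \int (f\circ\phi^{-1}) u\,d\nu = 0$ when $f$ is $\cR$-invariant), we get $\EE_\nu[u_\phi|\cR] = 0$.

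For the density statement, the cleanest route is duality. Let $W$ denote the closed span in $L^1(B)$ of $\{u_\phi : u\in L^1(B),\ \phi\in\Phi\}$; we want $W = L^1_0(B)$. One inclusion ($W\subseteq L^1_0(B)$) follows from $\EE_\nu[u_\phi|\cR]=0$, just shown. For the reverse, suppose $h\in L^\infty(B)$ annihilates $W$, i.e.\ $\int h\,u_\phi\,d\nu = 0$ for every $u\in L^1$ and every $\phi\in\Phi$. By Lemma \ref{u_phi} again,
$$
0 = \int h(b)\big(u(b) - u(\phi b)D(\phi b, b)\big)\,d\nu(b) = \int u(b)\big(h(b) - h(\phi^{-1}b)\big)\,d\nu(b)
$$
for all $u\in L^1$, so $h = h\circ\phi^{-1}$ $\nu$-a.e., for every $\phi\in\Phi$. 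Since $\Phi$ generates $\cR$, the group $\langle\Phi\rangle$ acts with orbits that are $\nu$-conull in each $\cR$-class, so $h$ is $\cR$-invariant (a.e.). Then for any $w\in L^1_0(B)$ we have $\int h w\,d\nu = \int \EE_\nu[h w\,|\,\cR]\,d\nu = \int h\,\EE_\nu[w|\cR]\,d\nu = 0$. Thus every bounded functional vanishing on $W$ vanishes on $L^1_0(B)$; since $W\subseteq L^1_0(B)$ is closed, the Hahn--Banach theorem gives $W = L^1_0(B)$.

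The main obstacle I anticipate is the passage "$h$ is $\langle\Phi\rangle$-invariant $\Longrightarrow$ $h$ is $\cR$-invariant,'' which requires care about null sets: each relation $h = h\circ\phi^{-1}$ holds only off a $\phi$-dependent null set, and one must assemble countably many of these (legitimate since $\Phi$ is countable) and then argue that the resulting conull invariance under $\langle\Phi\rangle$ forces essential $\cR$-invariance, using that $\langle\Phi\rangle$-orbits exhaust $\cR$-classes and the quasi-invariance of $\nu$. A secondary technical point is ensuring $u_\phi\in L^1$ and that the manipulations with $D$ are valid a.e.\ (this is exactly what Lemma \ref{u_phi} is for, so it should be routine). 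Everything else is bookkeeping: the finite-$n$ vanishing of $\SUM^\cF_n[u_\phi]$ is the heart of the convergence claim and is essentially immediate from extreme asymptotic invariance plus the cocycle identity.
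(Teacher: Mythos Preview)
Your proposal is correct and follows essentially the same route as the paper: the vanishing of $\SUM^\cF_n[u_\phi]$ for $n>N(\phi,b)$ via the cocycle identity and extreme asymptotic invariance, then the duality argument showing any $h\in L^\infty$ annihilating the span must be $\langle\Phi\rangle$-invariant (hence $\cR$-invariant, using countability of $\Phi$), which forces it to annihilate all of $L^1_0(B)$. The only cosmetic difference is that the paper phrases the duality step as working in ``$L^\infty_0(B)$, the Banach dual of $L^1_0(B)$'' and concluding $f=\EE_\nu[f|\cR]=0$ directly, whereas you work in $L^\infty(B)$ and invoke Hahn--Banach; the substance is identical.
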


\begin{proof}
Note
$$\SUM^\cF_n[u_\phi](b)  = \sum_{b' \in \cF_n(b)} u(b') D(b',b) - u(\phi(b'))D(\phi(b'),b) = 0$$
for all $n>N(\phi,b)$ since $\phi(\cF_n(b)) = \cF_n(b)$. In particular $\SUM^\cF_n[u_\phi]$ converges pointwise almost everywhere to $0$.
By Lemma \ref{u_phi}, $\EE_\nu[u_\phi|\cR]=0$. To see that the span of the set of all functions of the above form is norm dense in $L^1_0(B)$, let $L^\infty_0(B)$ be the Banach dual of $L^1_0(B)$. Suppose $f \in L^\infty_0(B)$, $\int f u_\phi~d\nu=0$ for all $u \in L^1_0(B)$ and $\phi \in \Phi$. Then
$$\int f(b) u(b)~d\nu(b) = \int f(b) u(\phi(b))D(\phi(b),b)~d\nu(b) = \int f(\phi^{-1}b) u(b)~d\nu(b)$$
by the previous lemma. Since this equality holds for all $u \in L^1_0(B)$, it follows that $f = f\circ \phi^{-1}$ a.e. for every $\phi \in \Phi$. Since $\Phi$ is countable and generating, this implies that $f$ is $\cR$-invariant. So $f=\EE_\nu[f|\cR]=0$ a.e. This proves that the weak closure of the span of the collection of functions of the form $u_\phi$ (with $u\in L^1(B)$ and $\phi \in \Phi$) is all of $L^1_0(B)$. Because the weak closure of a subspace equals its norm closure, this proves the lemma.
\end{proof}

We will now use these facts in order to construct, given $v\in L^1(B)$, a norm-dense subset $\cL_v\subset L^1(B)$, such that for $u\in \cL_v$, $\frac{\SUM^\cF_n[u]}{\SUM^\cF_n[v]}$ converges almost everywhere.

\begin{proof}[Proof of Theorem \ref{thm:dense}]
Consider the linear subspace $\cL_v$ of $L^1(\nu)$ spanned by all functions of the form 
$wv$, for all bounded $L^1$ functions $w$ which are $\cR$-invariant, together with all the functions of the form $u_\phi$, as $\phi$ ranges over the set $\Phi$ of inner automorphisms defined above, and $u$ over $\left(L^1\cap L^\infty\right)(\nu)$. Thus 

$$\cL_v=\set{wv+u_\phi\,\,;\,\, w\in L^\infty(\nu)^\cR\,,\, u\in L^1\cap L^\infty( \nu), ~\phi \in \Phi} 
$$

Since $w$ is $\cR$-invariant, clearly $\RATIO^\cF_n[wv,v]=w$.   Lemma \ref{lem:u_phi} implies that  for every function $y\in \cL_v$ the ratios $\RATIO^\cF_n[y,v]$ converge almost everywhere. To show that $\cL_v$ is norm dense in $L^1( \nu)$, assume that $k\in L^\infty( \nu)$ satisfies $\int_{B} ky ~d \nu=0$ for all $y\in \cL_v$, and we will show that $k=0$. But under the foregoing condition  we have in particular that 
$$\int k(b) u(b)~d\nu(b) = \int k(b) u(\phi(b))D(\phi(b),b)~d\nu(b) = \int k(\phi^{-1}b) u(b)~d\nu(b).$$
 for every $u\in L^1( \nu)$ and $\phi \in \Phi$ by Lemma \ref{u_phi}. So $k$ must be invariant under $\Phi$. As we noted in the proof of Lemma \ref{lem:u_phi} above, it follows that $k$ is an $\cR$-invariant function. Therefore by definition $kv\in \cL_v$, and hence 
$\int_{B} k \cdot kv  ~d\nu=0$, so that $k$ vanishes on the support of $v$ which is all of $B$. Thus $\cL_v$ is weakly dense and therefore norm dense in $L^1(\nu)$. This concludes the proof of Theorem \ref{thm:dense}. 
\end{proof}

\subsection{Identifying the limit}
 We proceed to identify the limit of $ \RATIO^\cF_n[u,v]$ as the $\cR$-invariant function given by 
the conditional expectation  $ \EE_{\nu_v}\left[\frac{u}{v}\Big|\cR\right] $, where $d\nu_v=vd\nu$.  

\begin{lem}\label{lem:identify}
Let $\cL_v$ be the subspace defined in the proof of Theorem \ref{thm:dense}. Then for every $y \in \cL_v$, 
$$\lim_{n\to \infty}\RATIO^\cF_n[y,v]=  \EE_{\nu_v}\left[\frac{y}{v}\Big|\cR\right].$$
Moreover, for any $z \in L^1(\nu)$ and $\epsilon>0$ there exists $y \in \cL_v$ with $\EE_{\nu_v}\left[\frac{y}{v}\Big|\cR\right] = \EE_{\nu_v}\left[\frac{z}{v}\Big|\cR\right]$ and $\norm{u-y}_{L^1( \nu)}< \epsilon$.
\end{lem}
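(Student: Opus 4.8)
The plan is to verify the first assertion on the explicit spanning set of $\cL_v$ and then extend it by linearity, after which the density-plus-correction claim becomes essentially a bookkeeping exercise. Recall that $\cL_v$ is spanned by functions of two types: (i) $wv$ with $w\in L^\infty(\nu)^\cR$, and (ii) $u_\phi(b)=u(b)-u(\phi b)D(\phi b,b)$ with $u\in L^1\cap L^\infty(\nu)$ and $\phi\in\Phi$. For type (i), since $w$ is $\cR$-invariant we have $\RATIO^\cF_n[wv,v]=w$ identically (as noted in the proof of Theorem \ref{thm:dense}), while $\EE_{\nu_v}[wv/v\mid\cR]=\EE_{\nu_v}[w\mid\cR]=w$ because $w$ is already $\cR$-measurable. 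For type (ii), Lemma \ref{lem:u_phi} gives $\SUM^\cF_n[u_\phi]\to 0$ pointwise a.e., so $\RATIO^\cF_n[u_\phi,v]\to 0$ a.e. (here one uses that $\SUM^\cF_n[v](b)\ge v(b)D(b,b)=v(b)>0$ since $\cF$ is anchored, so the denominator stays bounded away from $0$ along the sequence, and more precisely the denominators are eventually constant by the extreme Besicovich property — but we only need the numerator goes to $0$ and the denominator does not vanish). On the other hand, again by Lemma \ref{lem:u_phi}, $\EE_\nu[u_\phi\mid\cR]=0$; I need the slightly stronger statement $\EE_{\nu_v}[u_\phi/v\mid\cR]=0$, which follows from Lemma \ref{u_phi} applied with $f=hu_\phi/v$ for $h$ ranging over $\cR$-invariant bounded functions, noting $D(\phi b,b)$ absorbs correctly and $h\circ\phi^{-1}=h$ — so the $\nu_v$-integral of $(u_\phi/v)h$ against $v$ vanishes for every such $h$. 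Combining the two types by linearity gives the first displayed equality for all $y\in\cL_v$.

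For the second assertion, let $z\in L^1(\nu)$ and $\epsilon>0$. The idea is: first subtract off the "correct" $\cR$-invariant part of $z$ to land in $L^1_0$, approximate there by type-(ii) functions, and then add the invariant part back as a type-(i) function. Concretely, set $w_0:=\EE_{\nu_v}[z/v\mid\cR]$; this is $\cR$-invariant but need not lie in $L^\infty$, so first truncate: choose a bounded $\cR$-invariant $w_1$ with $\norm{(w_0-w_1)v}_{L^1(\nu)}$ small (possible since $w_0 v\in L^1(\nu)$ — indeed $\EE_{\nu_v}[z/v\mid\cR]v\in L^1(\nu)$ because $\int|\EE_{\nu_v}[z/v\mid\cR]|v\,d\nu\le\int|z/v|v\,d\nu=\norm z_{L^1}$ — and one truncates $w_0$ at height $M$ on the set where $v\ge 1/M$, etc.). Then $z-w_1 v$ is approximately in $L^1_0$; more carefully, $\EE_{\nu_v}[(z-w_1 v)/v\mid\cR]=w_0-w_1$ is small in the weighted norm. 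Now approximate $z-w_1v$ in $L^1(\nu)$ by an element of the span of type-(ii) functions plus a small $L^1$ error — but this is exactly what the density argument in Lemma \ref{lem:u_phi} provides for $L^1_0$, and the small discrepancy $(w_0-w_1)v$ can be folded back in as another type-(i) term $w_1' v$. Setting $y$ equal to $(w_1+w_1')v$ plus the chosen linear combination of $u_\phi$'s, one arranges both $\EE_{\nu_v}[y/v\mid\cR]=\EE_{\nu_v}[z/v\mid\cR]$ exactly (by choosing the invariant part to match) and $\norm{z-y}_{L^1(\nu)}<\epsilon$.

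The main obstacle I anticipate is the second assertion's demand that the conditional expectations agree \emph{exactly} while the $L^1$-distance is merely small: one cannot simply project, because the type-(ii) functions only span $L^1_0$ in the \emph{norm} closure, not exactly, so an arbitrarily good but imperfect approximant will have a nonzero (though small) $\cR$-invariant part that must be cancelled. The remedy is the standard one sketched above — split off the invariant component first, approximate the remainder, then restore the exact invariant component as a legitimate element $wv\in\cL_v$ — but care is needed because $w_0=\EE_{\nu_v}[z/v\mid\cR]$ need not be bounded, so a preliminary truncation argument (controlling the error in the $\nu_v$-norm, using $v>0$ everywhere and $v\in L^1$) is required to stay inside the stated form of $\cL_v$. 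A secondary, purely technical point is justifying $\EE_{\nu_v}[u_\phi/v\mid\cR]=0$ rather than merely $\EE_\nu[u_\phi\mid\cR]=0$; this is immediate from Lemma \ref{u_phi} once one tests against $\cR$-invariant functions, since the Radon–Nikodym cocycle $D$ and the density $v$ interact exactly as required.
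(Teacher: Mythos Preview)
Your argument for the first assertion is correct and parallels the paper's: both verify the claim on the two types of generators of $\cL_v$ and extend by linearity. The paper packages this via a map $\Psi_v(wv + u_\phi) := wv$, shown to be a well-defined $L^1$-contraction by testing against $\operatorname{sign}(w)$, then extended by continuity to all of $L^1(\nu)$ and identified with $v \cdot \EE_{\nu_v}[\,\cdot\,/v \mid \cR]$; your direct verification that $\EE_{\nu_v}[u_\phi/v \mid \cR] = 0$ (by integrating against $\cR$-invariant $h$ and invoking Lemma~\ref{u_phi}) is equivalent and arguably cleaner.

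For the second assertion there is a genuine gap that your truncation does not close. You correctly identify the obstacle: $w_0 := \EE_{\nu_v}[z/v \mid \cR]$ need not be bounded. But then no element of $\cL_v$ can have $\nu_v$-conditional expectation \emph{exactly} equal to $w_0$, since any $y = wv + (\text{finite sum of }u_\phi) \in \cL_v$ satisfies $\EE_{\nu_v}[y/v \mid \cR] = w$ with $w$ bounded by construction. Your step ``the small discrepancy $(w_0-w_1)v$ can be folded back in as another type-(i) term $w_1'v$'' would force $w_1' = w_0 - w_1$, which is unbounded and hence not of type (i). (The paper's own proof is no more complete at this point: it asserts, from the contraction property of $\Psi_v$ and density of $\cL_v$, the existence of $y \in \cL_v$ with $\Psi_v(y) = \Psi_v(z)$ exactly --- but $\Psi_v(\cL_v)$ consists only of $wv$ with $w$ bounded, so this fails whenever $w_0 \notin L^\infty$.) The clean repair is to enlarge $\cL_v$ by allowing $w$ to be any $\cR$-invariant function with $wv \in L^1(\nu)$: the identity $\RATIO^\cF_n[wv,v] = w$ still holds for such $w$, the density argument in Theorem~\ref{thm:dense} is unaffected, and now $w_0 v \in \cL_v$, so one may take $y = w_0 v + q$ with $q$ in the span of the $u_\phi$'s approximating $z - w_0 v \in L^1_0(\nu)$, obtaining both exact equality of conditional expectations and $\|z - y\|_{L^1(\nu)} < \epsilon$.
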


\begin{proof}
First, we already saw in the proof of Theorem \ref{thm:dense} above that if $wv+u_\phi=y\in \cL_v$ with $w$ an $\cR$-invariant function, then $\lim_{n\to \infty}\RATIO^\cF_n[y,v]=w$. If $y$ has another representation $y^\prime=w^\prime v+u^\prime_{\phi^\prime} $, then it follows that $w=w^\prime$, so that $w$ is uniquely determined by $y$. 

Thus we can consider the well-defined map $y=wv+u_\phi\mapsto wv=\Psi_v(y)$. We claim that 
 $\Psi_v : \cL_v\to L^1(\nu)$ is a contraction when taking 
 the $L^1( \nu)$-norm on both sides. Indeed, for any bounded $\cR$-invariant function $k$, we have 
 $$\int_{B}k y~d\nu=\int_{B}\left( kwv+ku_\phi\right)~ d\nu=\int_{B}kwv ~d \nu$$
Taking $k=\text{sign}(w)$, $k$ is obviously $\cR$-invariant since $w$ is, and thus 
$$\norm{\Psi_v(y)}_{L^1(\nu)}
=\norm{wv}_{L^1( \nu)}=
\int_{B}\abs{w}v~d \nu=$$
$$=\int_B \text{ sign}(w) w v ~d\nu=\int_{B}\text{sign}(w)y ~d \nu\le \norm{y}_{L^1( \nu)}\,\,.$$
It follows that $\Psi_v : \cL_v\to L^1( \nu)$ can be extended to a linear operator of norm bounded by $1$ from the closure of $\cL_v$, namely $L^1( \nu)$, to $L^1( \nu)$. 

 Note that in actuality, looking first at functions in  $\cL_v$ and then at arbitrary functions in  $L^1( \nu)$, we see that the range of $\Psi_v$ is contained  in the closed subspace of $L^1(\nu)$ consisting of functions of the form $wv$, where $w$ is an $\cR$-invariant function. This space is naturally identified with the Banach space of $\cR$-invariant functions in $L^1(\nu_v)$. 

Clearly, the foregoing shows that for every $y\in \cL_v$, $\Psi_v(y)=wv$ has the property that for every bounded $\cR$-invariant function $k$, $$\int_{B} k\Psi_v(y)~d \nu=\int_{B} ky ~d\nu\,.$$
By the norm density of $\cL_v$ and by continuity of $\Psi_v(y)$, it follows that the last equation is valid also for $\Psi_v(z)$ for every $z\in L^1(\nu)$. 

We now claim that for every $y\in \cL_v$, 
$$\lim_{n\to \infty} 
\RATIO^\cF_n[y,v]= w =\frac{\Psi_v(y)}{v}
=  \EE_{\nu_v}\left[\frac{y}{v}\Big|\cR\right]. $$
Indeed the convergence of the ratios to $w$ was established above, and the fact that $w$ is the stated conditional expectation is equivalent to $$\int_B k w \cdot v~d\nu=\int_B k \frac{y}{v} \cdot v~d\nu$$
for every $k\in L^\infty(\nu)^\cR$, which was just verified. Because the conditional expectation is continuous and $\cL_v$ is norm-dense, it follows that
$$\frac{\Psi_v(z)}{v}
=  \EE_{\nu_v}\left[\frac{z}{v}\Big|\cR\right]$$
for every $z \in L^1(\nu)$. 

Now let $z \in L^1(\nu)$ and $\epsilon>0$. Because $\Psi_v$ has norm at most $1$ and $\cL_v$ is norm-dense, there is a $y \in \cL_v$ such that $\Psi_v(y)=\Psi_v(z)$ and $\norm{y-z}_{L^1( \nu)}< \epsilon$. The condition  $\Psi_v(y)=\Psi_v(z)$ implies $\EE_{\nu_v}\left[\frac{y}{v}\Big|\cR\right] = \EE_{\nu_v}\left[\frac{z}{v}\Big|\cR\right]$ as required.

\end{proof}

We will now show that the property that 
$\lim_{n\to \infty} 
\RATIO^\cF_n[u,v]=  \EE_{\nu_v}\left[\frac{u}{v}\Big|\cR\right] $ extends to all $u\in L^1( \nu)$, by an argument that employs the weak-type $(1,1)$ ratio maximal inequality. 

\subsection{Applying the ratio maximal inequality}

\begin{proof}[Proof of Theorem \ref{thm:pointwise} assuming Theorem \ref{thm:maximal}]

Let  $u$ be any function in $L^1( \nu)$ and $\epsilon > 0$. By Lemma \ref{lem:identify} there exists a function $y\in \cL_v$ which approximates $u$, namely $\norm{u-y}_{L^1( \nu)}< \epsilon^2$, and in addition $\EE_{\nu_v}\left[\frac{y}{v}\Big|\cR\right] = \EE_{\nu_v}\left[\frac{u}{v}\Big|\cR\right]$. 

Hence, applying the fact that $\lim_{n\to \infty} \RATIO^\cF_n[y,v]=\EE_{\nu_v}\left[\frac{y}{v}\Big|\cR\right] $ almost surely, we have 
\begin{eqnarray*}
&&\limsup_{n\to \infty}\abs{\RATIO^\cF_n(u,v)-\EE_{\nu_v}\left[\frac{u}{v}\Big|\cR\right]}\\
 &\le& \limsup_{n\to \infty}\abs{ \RATIO^\cF_n(u-y,v)} + \limsup_{n\to \infty}\abs{\RATIO^\cF_n(y,v)-\EE_{\nu_v}\left[\frac{u}{v}\Big|\cR\right]} \\
&=&\limsup_{n\to \infty}\abs{ \RATIO^\cF_n(u-y,v)}.
\end{eqnarray*}
By Theorem \ref{thm:maximal},
$$\nu_v\set{\limsup_{n\to \infty}\abs{ \RATIO^\cF_n(u-y,v)}> \epsilon}\le \frac{1}{\epsilon}\norm{u-y}_{L^1( \nu)} \le \epsilon.$$
Since this inequality is valid for all $\epsilon > 0$, we have 
$$\limsup_{n\to \infty} \abs{ \RATIO^\cF_n(u,v)-\EE_{\nu_v}\left[\frac{u}{v}\Big|\cR\right]}=0$$ 
almost everywhere and this completes the proof of Theorem \ref{thm:pointwise}    subject to the proof of Theorem \ref{thm:maximal}, to which we now turn.  

\end{proof}

\subsection{The weak-type  ratio maximal inequality in $L^1$}

\begin{proof}[Proof of Theorem \ref{thm:maximal}]
Without loss of generality, we may assume that $u$ is nonnegative. For $T>0$, define $M^\cF_T[u,v]:B \to \RR$ by 
$$M^\cF_T[u,v](b) := \sup_{T \ge n \ge 0} \RATIO^\cF_n[u,v](b).$$
Let $\epsilon>0$. Let $D$ be the set of all $b \in B$ such that $M^\cF[u,v](b)>\epsilon$ and $D_T$ be the set of all $b \in B$ such that $M^\cF_T[u,v](b)>\epsilon$. Since $\{D_T\}_{T>0}$ is an increasing sequence with $D = \cup_{T>0} D_T$, it suffices to prove that $\nu_v(D_T) \le  \epsilon^{-1}\int_{D_T} u~d\nu$ for each $T>0$. 

Fix $T>0$ and  let $\rho:D_T \to \RR$ be defined by $\rho(b)=n$ if $n$ is the largest number such that $\SUM^\cF_{n}[u,v](b) > \epsilon$ and $n\le T$. Let $\cC$ be the collection of all sets of the form $\cF_{\rho(b)}(b)$ for $b \in D_T$. Let $E$ be the union of all sets in $\cC$. Because $\cF$ is anchored, it follows that $D_T \subset E$. Now the extreme Besicovich property of $\cF$ implies $D_T=E$ and the sets in $\cC$ are pairwise disjoint.

 Define $F_u:B \times B \to \RR$ by
\begin{itemize}
\item  $F_u\big( b,b' \big) = u(b)|\cF_{\rho(b)}(b) |^{-1}$ if $b\in E$ and $b' \in \cF_{\rho(b)}(b) $. 
\item $F_u\big( b,b' \big) = 0$ otherwise.
\end{itemize}
Define $F_v$ similarly with $v$ in place of $u$. 

For any $b,b'$ either $\cF_{\rho(b)}(b)  = \cF_{\rho(b')}(b') $ or $\cF_{\rho(b)}(b)  \cap \cF_{\rho(b')}(b') =\emptyset$. Therefore,
\begin{eqnarray*}
\int_E u(b) ~d\nu(b) &=& \int_{B}\sum_{b'} F_u\big( b,b' \big) ~d\nu(b)= \int_{\cR} F_u(b,b')~ dM(b,b')\\
&=&\int_{\cR} F_u(b,b') D(b,b') ~d{\check M}(b,b') =  \int_{B} \sum_{b} F_u\big( b,b' \big)D(b,b')~ d\nu(b')\\
&=& \int_E \frac{1}{| \cF_{\rho(b')}(b') |} \sum_{b\in\cF_{\rho(b')}(b') } u(b) D(b,b')~d\nu(b') =\int_E \frac{1}{| \cF_{\rho(b')}(b') |} \SUM^\cF_{\rho(b')}[u](b') ~d\nu(b').
\end{eqnarray*}
Similarly,
\begin{eqnarray*}
\int_E v(b) ~d\nu(b) &=&\int_E \frac{1}{|\cF_{\rho(b')}(b')|} \SUM^\cF_{\rho(b')}[v](b') ~d\nu(b').
\end{eqnarray*}
By definition of $D_T$,
$$\int_{D_T} \frac{1}{| \cF_{\rho(b')}(b') |} \SUM^\cF_{\rho(b')}[u](b') ~d\nu(b') \ge \epsilon \int_{D_T} \frac{1}{|\cF_{\rho(b')}(b')|} \SUM^\cF_{\rho(b')}[v](b') ~d\nu(b').$$
Since $E=D_T$ we now have
$$\int_{D_T} u(b) ~d\nu(b)  \ge \epsilon \int_{D_T} v(b) ~d\nu(b)$$
which implies
$$\nu_v(\set{M^\cF_T[u,v](b)>\epsilon})=\nu_v(D_T)\le  \frac{1}{\epsilon}\int_{\set{M^\cF_T[u,v](b)>\epsilon}}u(b)~d\nu(b) \le \frac{\norm{u}_{L^1(\nu)}}{\epsilon}.
$$
The theorem follows by letting $T \to \infty$.
\end{proof}

  \subsection{A ratio maximal inequality in $L^p$}

Let us note that the weak type $(1,1)$ maximal inequality can be used to derive a strong type $L^p$ maximal inequality, as follows. 
\begin{thm}\label{maxLp}
 Let $u,v\in L^1(B,\nu)$ with $\EE_\nu[v|\cR] > 0$ (where $\EE_\nu[\cdot|\cR]$ denotes conditional expectation with respect to the sigma-algebra of $\cR$-invariant Borel sets). Let
$$M^\cF[u,v]:=\sup_{n} \abs{\RATIO^\cF_{n}[u,v]}.$$
 If $\cF$ is anchored and satisfies the extreme Besicovich property then the strong type $L^p$ ratio maximal inequality holds, namely, assuming $\frac{u}{v}\in L^p( \nu_v)$
$$\norm{ M^\cF[u,v]}_{L^p( \nu_v)}^p\le 
\frac{p}{p-1}\norm{\frac{u}{v}}_{L^p( \nu_v)}^p$$ 
or more explicitly 
\begin{eqnarray*}
\int_{B} M^\cF[u,v]^p(b) v(b)~d\nu \le \frac{p}{p-1}\int_{B}\left(\frac{u(b)}{v(b)}\right)^p  v(b)~d\nu< \infty.
\end{eqnarray*}
\end{thm}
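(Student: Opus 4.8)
The plan is to deduce the strong-type $L^p$ bound from the weak-type $(1,1)$ inequality of Theorem \ref{thm:maximal} by a Marcinkiewicz-type distributional argument carried out in the measure $\nu_v$, reducing everything to the scalar function $w := u/v \in L^p(\nu_v)$. First I would observe that the operator $M^\cF[u,v]$ can be rewritten entirely in terms of $w$ and $\nu_v$: since $\SUM^\cF_n[u](b) = \sum_{b'\in \cF_n(b)} u(b') D(b',b) = \sum_{b'\in \cF_n(b)} w(b') v(b') D(b',b)$, the ratio $\RATIO^\cF_n[u,v](b)$ is a weighted average of the values $w(b')$ over $b'\in\cF_n(b)$, with weights $v(b')D(b',b)$ that sum (in the denominator) to $\SUM^\cF_n[v](b)$. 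Consequently $M^\cF[u,v]$ depends on $u$ only through $w$, and we may as well assume $u = wv$ with $w\ge 0$; moreover the weak-type inequality of Theorem \ref{thm:maximal}, applied with the pair $(\tilde u, v)$ where $\tilde u = (w\wedge\lambda) v$ or $\tilde u = (w-\lambda)_+ v$, gives control of level sets of $M^\cF$ in terms of truncations of $w$.

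The key steps, in order, are: (1) Fix $\lambda>0$ and split $w = w_\lambda + w^\lambda$ where $w_\lambda = w\wedge\lambda$ (bounded part) and $w^\lambda = (w-\lambda)_+$ (tail part). By the averaging description above, $M^\cF[w_\lambda v, v] \le \lambda$ pointwise, since every $\RATIO^\cF_n$ is an average of values $\le\lambda$. Hence $\{M^\cF[wv,v] > 2\lambda\} \subset \{M^\cF[w^\lambda v, v] > \lambda\}$, using sublinearity of the sup of ratios in the numerator. (2) Apply Theorem \ref{thm:maximal} to the pair $(w^\lambda v, v)$: since $w^\lambda v \in L^1(\nu)$ with $w^\lambda v \ge 0$, we get
\begin{equation*}
\nu_v\big(\{M^\cF[wv,v] > 2\lambda\}\big) \le \frac{1}{\lambda}\int_{\{w > \lambda\}} (w-\lambda) v \, d\nu \le \frac{1}{\lambda}\int_{\{w>\lambda\}} w\, d\nu_v.
\end{equation*}
(3) Integrate the distribution function: write $\int_B M^\cF[wv,v]^p \, d\nu_v = p\, 2^p \int_0^\infty \lambda^{p-1} \nu_v(\{M^\cF[wv,v] > 2\lambda\})\, d\lambda$, substitute the bound from step (2), and apply Fubini to the double integral over $\{(\lambda, b): w(b) > \lambda\}$. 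The inner $\lambda$-integral $\int_0^{w(b)} \lambda^{p-2}\, d\lambda = \frac{w(b)^{p-1}}{p-1}$ produces the factor $\frac{p\,2^p}{p-1}\int_B w^p\, d\nu_v$. (4) To recover the stated constant $\frac{p}{p-1}$ without the spurious $2^p$, redo steps (1)–(3) with the sharper splitting at level $\lambda$ rather than $2\lambda$: use $\{M^\cF[wv,v] > \lambda\} \subset \{M^\cF[w^{\lambda/2}v, v] > \lambda/2\}$ is too lossy, so instead split $w = (w\wedge \delta\lambda) + (w-\delta\lambda)_+$ and optimize, or more cleanly observe that $M^\cF[w_\lambda v,v]\le\lambda$ gives directly $\{M^\cF[wv,v]>\lambda\}\subset\{M^\cF[w^\lambda v,v]>0\}$ is useless — the correct sharp route is the standard one yielding exactly $\frac{p}{p-1}$ by integrating $\nu_v(\{M^\cF > \lambda\}) \le \lambda^{-1}\int_{\{w>\lambda\}} w\, d\nu_v$ with no factor of $2$, which follows since on $\{M^\cF[wv,v] > \lambda\}$ the relevant Besicovich-selected cells already satisfy $\SUM^\cF_{\rho(b)}[w^\lambda v] > \lambda \cdot \SUM^\cF_{\rho(b)}[v]$ directly — i.e. rerun the proof of Theorem \ref{thm:maximal} verbatim with $u$ replaced by $w^\lambda v$ on the set $\{M^\cF[wv,v]>\lambda\}$, exploiting $M^\cF[w_\lambda v,v]\le\lambda$. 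Then step (3) with the $2^p$ removed gives precisely $\frac{p}{p-1}\|w\|_{L^p(\nu_v)}^p$.

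The main obstacle I anticipate is step (4): getting the exact constant $\frac{p}{p-1}$ rather than a worse constant requires that the truncation at level $\lambda$ interact cleanly with the Besicovich selection argument — specifically, one must verify that on each selected cell $\cF_{\rho(b')}(b')$ the inequality $\SUM^\cF_{\rho(b')}[w^\lambda v] \ge \lambda\, \SUM^\cF_{\rho(b')}[v] - \SUM^\cF_{\rho(b')}[w_\lambda v] \ge 0$ combines correctly with the disjointness and the change-of-variables identity $\int_E u\, d\nu = \int_E |\cF_{\rho(b')}(b')|^{-1}\SUM^\cF_{\rho(b')}[u]\, d\nu$ used in the proof of Theorem \ref{thm:maximal}. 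Once that bookkeeping is done, the distributional integration is routine and the hypothesis $\EE_\nu[v|\cR] > 0$ guarantees $v > 0$ $\nu$-a.e. on the support, so that $w = u/v$ and $\nu_v$ are well-defined and the finiteness $\|w\|_{L^p(\nu_v)} < \infty$ is exactly the standing assumption.
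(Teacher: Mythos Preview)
Your truncation approach in steps (1)--(3) is correct and yields a strong-type bound, but with constant $\frac{p\cdot 2^p}{p-1}$. Step (4), where you try to remove the $2^p$, contains a genuine error: the claimed inequality on the selected cells, $\SUM^\cF_{\rho(b)}[w^\lambda v] > \lambda\,\SUM^\cF_{\rho(b)}[v]$, is false. From $\SUM^\cF_{\rho(b)}[wv] > \lambda\,\SUM^\cF_{\rho(b)}[v]$ and $w^\lambda = (w-\lambda)_+$ you only get $\SUM^\cF_{\rho(b)}[w^\lambda v] \ge \SUM^\cF_{\rho(b)}[wv] - \lambda\,\SUM^\cF_{\rho(b)}[v] > 0$, not the inequality you assert. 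Consequently the distributional bound you write down, $\nu_v(\{M^\cF > \lambda\}) \le \lambda^{-1}\int_{\{w>\lambda\}} w\, d\nu_v$, does not follow from this line of argument (and indeed is not true in general: the integration should be over $\{M^\cF>\lambda\}$, a set typically larger than $\{w>\lambda\}$).

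The paper's route is much shorter and avoids truncation entirely. Theorem \ref{thm:maximal} already provides the \emph{localized} weak-type inequality
\[
\nu_v(\{M^\cF[u,v] > \epsilon\}) \le \frac{1}{\epsilon}\int_{\{M^\cF[u,v]>\epsilon\}} u\, d\nu = \frac{1}{\epsilon}\int_{\{M^\cF[u,v]>\epsilon\}} \frac{u}{v}\, d\nu_v,
\]
and the paper simply invokes Garsia's lemma \cite{Ga70}: if $\Phi,\Psi\ge 0$ on $(Y,\eta)$ with $\Psi\in L^p(\eta)$, the level sets $\{\Phi>\epsilon\}$ have finite measure, and $\eta(\{\Phi>\epsilon\})\le \epsilon^{-1}\int_{\{\Phi>\epsilon\}}\Psi\,d\eta$ for all $\epsilon>0$, then $\|\Phi\|_{L^p(\eta)}\le \frac{p}{p-1}\|\Psi\|_{L^p(\eta)}$. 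One applies this with $\eta=\nu_v$, $\Phi=M^\cF[u,v]$, $\Psi=u/v$. The proof of Garsia's lemma is exactly the layer-cake computation you are circling around in step (4), but with the integral taken over the correct level set $\{\Phi>\epsilon\}$ rather than $\{\Psi>\epsilon\}$, followed by H\"older's inequality; that H\"older step is what produces the sharp constant $\frac{p}{p-1}$. So the refinement you are searching for is already built into the statement of Theorem \ref{thm:maximal}, and no splitting of $w$ is needed.
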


\begin{proof}
Let us first recall the following basic fact (see \cite{Ga70}). Suppose two non-negative measurable functions $\Phi$ and $\Psi $ on a standard $\sigma$-finite measure space $(Y,\eta)$ satisfy
\begin{enumerate}
\item $\Psi \in L^p(Y, \eta)$, for some $1 < p < \infty$, 
\item $\eta\set{y\,;\, \Phi(y)> \epsilon}< \infty$ for all $\epsilon > 0$, 
\item $\eta\set {y\,;\, \Phi(y) > \epsilon}\le \frac{1}{\epsilon} \int_{\set{y\,;\, \Phi(y) > \epsilon} }\Psi(y)~d\eta\,.$
\end{enumerate} 
Then $\Phi$ is in $L^p(Y,\eta)$, and $\norm{\Phi}_{L^p(Y,\eta)}\le \frac{p}{p-1}\norm{\Psi}_{L^p(Y,\eta)}\,.$

Fixing $v\ge 0$, let us consider the measure space $(Y,\eta)=(B, \nu_v)$, and define $\Phi=M^\cF[u,v]$, and $\Psi=\frac{u}{v}$. Then if both $u$ and $v$ are in $L^1( \nu)$, we have by Theorem \ref{thm:maximal}
$$\eta\set{y\,;\, \Phi(y)> \epsilon}=\nu_v\set{M^\cF[u,v](b) > \epsilon}\le \frac{1}{\epsilon}
\int_{\set{M^\cF[u,v](b)>\epsilon}}\frac{u(b)}{v(b)} v(b)~d\nu=\frac{1}{\epsilon} \int_{\set{y\,;\, \Phi(y) > \epsilon} }\Psi(y)~d\eta
$$
so that condition 2 and 3 above are satisfied. 
Assuming in addition that $\frac{u}{v}\in L^p( \nu_v)$, condition 1 above is satisfied as well, and we can conclude that 
$$\norm{M^\cF[u,v]}_{L^p( \nu_v)}\le \frac{p}{p-1}\norm{\frac{u}{v}}_{L^p( \nu_v)}$$
and the proof of the $L^p$ maximal inequality is complete. \end{proof}

\subsection{Cocycles}\label{sec:cocycle}

Given an equivalence relation $\cR$ and a sequence $\cF$ of subset functions with extreme properties, one can use cocycles defined on $\cR$ in order to push $\cF$ forward to a subset function with extreme properties defined on an extension of $\cR$. This fact will be  applied  below to actions of free groups. In general, suppose $(X,\lambda)$ is a standard measure space, $\Gamma$ is a group acting by measure-class-preserving transformations on $X$  and $\alpha:\cR \to \Gamma$ is a Borel cocycle. This means that $\alpha$ satisfies the cocycle equation:
$$\alpha(b_1,b_2)\alpha(b_2,b_3)=\alpha(b_1,b_3)$$
for a.e. $b_1,b_2,b_3 \in B$. Let $\cR^\alpha=\cR^\alpha(X)$ be the equivalence relation on $B\times X$ given by $(b,x)\sim_{\cR^\alpha}(b',x')$ if $(b,b') \in \cR$ and $\alpha(b',b)x=x'$. 
\begin{lem}\label{lem:cocycle}
Suppose $\cF=\{\cF_n\}_{n=1}^\infty$ is a sequence of subset functions for $\cR$. Let $\cF^\alpha=\{\cF^\alpha_n\}_{n=1}^\infty$ be defined by 
$$\cF^\alpha_n(b,x)=\{(b',x')\in B\times X:~ b' \in \cF_n(b)~ x' = \alpha(b',b)x\}.$$
If $\cF$ is anchored then $\cF^\alpha$ is anchored. If $\cF$ is extremely asymptotically invariant then $\cF^\alpha$ is extremely asymptotically invariant and if $\cF$ has the extreme Besicovich property then $\cF^\alpha$ has the extreme Besicovich property.
\end{lem}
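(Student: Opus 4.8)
The plan is to verify each of the three extreme properties for $\cF^\alpha$ by transferring the corresponding property of $\cF$ through the cocycle $\alpha$, using that the fibers of the projection $(b,x)\mapsto b$ restricted to an $\cR^\alpha$-class over a fixed $b$ are singletons once we fix the starting point $x$. First, observe that for $(b,x)\in B\times X$, the map $b'\mapsto (b',\alpha(b',b)x)$ is a bijection from $\cF_n(b)$ onto $\cF^\alpha_n(b,x)$, so $|\cF^\alpha_n(b,x)|=|\cF_n(b)|$; this immediately gives $\lim_n|\cF^\alpha_n(b,x)|=+\infty$ whenever $\cF$ has that property. The anchored property is the easiest: the cocycle equation (applied with $b_1=b_2=b_3=b$, after checking $\alpha(b,b)=e$) gives $\alpha(b,b)x=x$, so $b\in\cF_n(b)$ forces $(b,x)\in\cF^\alpha_n(b,x)$.

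For the extreme Besicovich property, suppose $(b,x)\sim_{\cR^\alpha}(b',x')$, i.e. $(b,b')\in\cR$ and $x'=\alpha(b',b)x$. I would show that $\cF^\alpha_n(b,x)$ and $\cF^\alpha_n(b',x')$ are either equal or disjoint, and equal for $n$ large. If $\cF_n(b)\cap\cF_n(b')=\emptyset$ then projecting to the first coordinate shows $\cF^\alpha_n(b,x)\cap\cF^\alpha_n(b',x')=\emptyset$. If $\cF_n(b)=\cF_n(b')$ (which holds for all $n\ge N(b,b')$ by the extreme Besicovich property of $\cF$), then for any $b''$ in this common set one computes, using the cocycle identity $\alpha(b'',b')\alpha(b',b)=\alpha(b'',b)$ together with $x'=\alpha(b',b)x$, that $\alpha(b'',b')x'=\alpha(b'',b')\alpha(b',b)x=\alpha(b'',b)x$; hence the element of $\cF^\alpha_n(b',x')$ sitting over $b''$ coincides with the element of $\cF^\alpha_n(b,x)$ over $b''$, so the two sets are equal. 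Thus $\cF^\alpha_n(b,x)=\cF^\alpha_n(b',x')$ for all $n\ge N(b,b')$, and for smaller $n$ they are equal or disjoint.

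For extreme asymptotic invariance, let $\Phi\subset\Inn(\cR)$ be a countable generating set witnessing the property for $\cF$. For each $\phi\in\Phi$ define $\phi^\alpha:B\times X\to B\times X$ by $\phi^\alpha(b,x)=(\phi(b),\alpha(\phi(b),b)x)$; the cocycle identity shows $\phi^\alpha$ has graph in $\cR^\alpha$ and that $(\phi^\alpha)^{-1}(b,x)=(\phi^{-1}(b),\alpha(\phi^{-1}(b),b)x)$, so $\phi^\alpha\in\Inn(\cR^\alpha)$. One checks that $\Phi^\alpha=\{\phi^\alpha:\phi\in\Phi\}$ generates $\cR^\alpha$: given $(b,x)\sim_{\cR^\alpha}(b',x')$, pick $\psi\in\langle\Phi\rangle$ with $\psi(b)=b'$, write $\psi$ as a word in $\Phi^{\pm1}$, form the corresponding word in $(\Phi^\alpha)^{\pm1}$, and use the cocycle identity to see it sends $(b,x)$ to $(\psi(b),\alpha(\psi(b),b)x)=(b',\alpha(b',b)x)=(b',x')$. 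Finally, for $n>N(\phi,b)$ we have $\phi(\cF_n(b))=\cF_n(b)$; applying $\phi^\alpha$ to $\cF^\alpha_n(b,x)$ and using the cocycle identity $\alpha(\phi(b'),b')\alpha(b',b)=\alpha(\phi(b'),b)$ for $b'\in\cF_n(b)$ shows $\phi^\alpha(\cF^\alpha_n(b,x))=\cF^\alpha_n(b,x)$, as needed.

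The routine but slightly delicate points — the only places I expect any friction — are the measurability of $\cF^\alpha_n$, $\phi^\alpha$ and the generating set $\Phi^\alpha$ (which follows from Borel-ness of $\alpha$ and of the $\cF_n$, handled by standard descriptive set theory for countable Borel equivalence relations), and the fact that the cocycle equations hold only almost everywhere, so one must restrict to a conull $\cR$-invariant set on which all the identities $\alpha(b,b)=e$, $\alpha(b_1,b_2)\alpha(b_2,b_3)=\alpha(b_1,b_3)$ hold simultaneously for all relevant triples in each equivalence class; on that conull set every assertion above is a purely algebraic manipulation of the cocycle identity and the bijection $b'\mapsto(b',\alpha(b',b)x)$. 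The main conceptual obstacle is simply keeping the bookkeeping of base points straight so that the cocycle identity is invoked in the correct direction each time — there is no real analytic difficulty here.
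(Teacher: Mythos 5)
Your argument is correct and follows essentially the same route as the paper: the paper's proof defines exactly the same maps $\phi_\alpha(b,x)=(\phi(b),\alpha(\phi(b),b)x)$ to witness extreme asymptotic invariance and dismisses the other two properties as "similar," which your verifications via the bijection $b'\mapsto(b',\alpha(b',b)x)$ and the cocycle identity fill in. Your attention to the almost-everywhere validity of the cocycle identity and to measurability is a reasonable extra precaution but not a departure from the paper's approach.
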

\begin{proof}
We will show that if $\cF$ is extremely asymptotically invariant then $\cF^\alpha$ is also extremely asymptotically invariant. The other claims are similar. Let $\Phi \subset \Inn(\cR)$ be a countable generating set witnessing the extreme asymptotic invariance. For each $\phi \in \Phi$, define $\phi_\alpha \in \Inn(\cR^\alpha)$ by $\phi_\alpha(b,x)=(\phi(b), \alpha(\phi(b),b)x)$. Then $\Phi_\alpha:=\{\phi_\alpha:~\phi \in \Phi\} \subset \Inn(\cR^\alpha)$ is a countable generating set witnessing the extreme asymptotic invariance of $\cF^\alpha$.
\end{proof}

\section{A horospherical ratio ergodic theorem for the free group}
Let $\FF=\langle a_1,\dots ,a_r \rangle$ be the free group of rank $r\ge 2$. The {\em reduced form} of an element $g\in \FF$ is an expression of the form $g=s_1\cdots s_n$ with $s_i \in \sS$ and $s_{i+1}\ne s_i^{-1}$ for all $i$. The reduced form of $g$ is unique. Define $|g|:=n$, the length of the reduced form of $g$. Define a distance on $\FF$ by $d(g_1,g_2):=|g_1^{-1}g_2|$.

\subsection{The boundary}

The boundary of $\FF$ is the set of all sequences $\xi=(\xi_1,\xi_2,\ldots) \in \sS^\NN$ such that $\xi_{i+1} \ne \xi_i^{-1}$ for all $i\ge 1$. We denote it be $\partial \FF$. 

We define a probability measure $\nu$ on $\partial \FF$ as follows. For every finite sequence $t_1,\ldots, t_n$ with $t_{i+1} \ne t_i^{-1}$ for $1\le i <n$, let
$$\nu\Big(\big\{ (\xi_1,\xi_2,\ldots) \in \partial \FF :~ \xi_i=t_i ~\forall 1\le i \le n\big\}\Big) :=(2r-1)^{-n+1}(2r)^{-1}.$$
By the Carath\'eodory extension theorem, this uniquely extends to a Borel probability measure $\nu$ on $\partial \FF$.

\subsection{Horospheres}\label{sec:horospheres}
There is a natural action of $\FF$ on $\partial \FF$ by 
$$(t_1\cdots t_n)\xi = (t_1,\ldots,t_{n-k},\xi_{k+1},\xi_{k+2}, \ldots)$$ where $t_1,\ldots, t_n \in \sS$,  $t_1\cdots t_n$ is in reduced form and $k$ is the largest number $\le n$ such that $\xi_i^{-1} = t_{n+1-i}$ for all $i\le k$.  Observe that if $g=t_1 \cdots t_n$ then the Radon-Nikodym derivative satisfies
$$\frac{d\nu \circ g}{d\nu}(\xi) = (2r-1)^{2k-n}.$$
Let $\cR_0$ be the tail equivalence relation on $\partial \FF$ given by $(\xi,\eta)\in\cR_0$ if there exists $N\ge 1$ such that $n \ge N \Rightarrow \xi_n=\eta_n$. Given $\xi\in \cR_0$, the set $H_\xi=\{g \in \FF:~(\xi,g^{-1}\xi) \in \cR_0\}$ is the {\em horosphere} centered at $\xi$ passing through the identity $e$. Note that $\frac{d\nu \circ g^{-1}}{d\nu}(\xi)=1$ if $g \in H_\xi$. So $\nu$ is $\cR_0$-invariant.

Let $\cB=\{\cB_n\}$ be the sequence of subset functions given by 
$$\cB_n(\xi)=\{ \eta \in \partial \FF:~ \eta_k = \xi_k~\forall k > n\}.$$
In other words, $\cB_n(\xi)$ is the set of all $g\xi$ where $g$ ranges over the intersection of the horosphere $H_\xi$ with the ball of radius $2n$ in $\FF$.

In the next section we show:
\begin{thm}\label{thm:extreme1}
The sequence $\cB$ is anchored, extremely asymptotically invariant and satisfies the extreme Besicovich property.
\end{thm}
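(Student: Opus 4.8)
**Proof proposal for Theorem 3.1 (that $\cB$ is anchored, extremely asymptotically invariant, and has the extreme Besicovich property).**

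The plan is to verify the three conditions directly from the combinatorics of the boundary $\partial\FF$ and the tail equivalence relation $\cR_0$, exploiting the fact that each $\cB_n(\xi)$ depends only on the first $n$ coordinates of $\xi$. For \emph{anchoredness}, observe that $\xi_k = \xi_k$ trivially for all $k>n$, so $\xi \in \cB_n(\xi)$ for every $n$; nothing more is needed. For the \emph{extreme Besicovich property}: given $(\xi,\eta)\in\cR_0$, there is an $N$ with $\xi_k=\eta_k$ for all $k>N$; I would check that for $n\ge N$ one has $\cB_n(\xi)=\cB_n(\eta)$, since the condition ``$\zeta_k=\xi_k\ \forall k>n$'' is then literally the same as ``$\zeta_k=\eta_k\ \forall k>n$'' (the two sequences agree above $n\ge N$). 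For $n<N$, either $\xi$ and $\eta$ already agree on all coordinates $>n$ (in which case $\cB_n(\xi)=\cB_n(\eta)$) or they disagree at some coordinate $k>n$, and then any $\zeta\in\cB_n(\xi)\cap\cB_n(\eta)$ would have to satisfy $\zeta_k=\xi_k$ and $\zeta_k=\eta_k$, a contradiction; so the two sets are disjoint. This also confirms $\cB_n(\xi)\subset[\xi]_{\cR_0}$, as required of a subset function. If $(\xi,\eta)\notin\cR_0$ there is nothing to prove since the definition only constrains the relation for pairs in $\cR_0$.

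For \emph{extreme asymptotic invariance} I must produce a countable generating set $\Phi\subset\Inn(\cR_0)$ and show that each $\phi\in\Phi$ eventually fixes $\cB_n(\xi)$ setwise, and that $|\cB_n(\xi)|\to\infty$. The size statement is immediate: $\cB_n(\xi)$ is in bijection with the admissible prefixes $(t_1,\dots,t_n)$ with $t_{i+1}\ne t_i^{-1}$ and $t_n\ne\xi_{n+1}^{-1}$, of which there are on the order of $(2r-1)^{n}$, so $|\cB_n(\xi)|\to\infty$. For the generating set, the natural choice is the collection of inner automorphisms coming from the partial action of $\FF$: for each $g\in\FF$ let $\phi_g$ be the map $\xi\mapsto g^{-1}\xi$ defined (almost everywhere) on the set where this lands in the same tail class — more cleanly, take the countable family of ``coordinate swap/shift'' maps that change finitely many initial symbols of $\xi$. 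Concretely, for a pair of admissible finite words $w,w'$ of the same cylinder-compatibility type, let $\phi_{w,w'}$ replace an initial segment equal to $w$ by $w'$; these lie in $\Inn(\cR_0)$, they generate $\cR_0$ (any tail-equivalence is realized by finitely many such replacements), and there are only countably many of them. The key point is that such a $\phi$ only alters coordinates $\xi_1,\dots,\xi_m$ for some fixed $m=m(\phi)$; hence for $n>m$, applying $\phi$ to any $\zeta\in\cB_n(\xi)$ leaves the coordinates above $n$ untouched and keeps $\phi(\zeta)$ admissible, so $\phi(\cB_n(\xi))=\cB_n(\phi(\xi))=\cB_n(\xi)$ once $n>\max(m,N(\xi,\phi(\xi)))$ — using the Besicovich equality already established. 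Thus $N(\phi,\xi)$ can be taken to be this maximum.

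I expect the main obstacle to be the bookkeeping for extreme asymptotic invariance: namely pinning down a clean countable generating subset of $\Inn(\cR_0)$ and verifying carefully that each generator (a) is genuinely a Borel inner automorphism of $\cR_0$ defined almost everywhere, (b) actually generates the full relation, and (c) modifies only a bounded-depth prefix so that it commutes with $\cB_n$ for large $n$. The subtlety is that a symbol replacement $w\mapsto w'$ is only partially defined (only on the cylinder $[w]$, and one must also respect the admissibility constraint at the junction), so one either works with partial maps and patches them into genuine inner automorphisms by a measurable selection / piecing argument, or one restricts to replacements $w\mapsto w'$ with $|w|=|w'|$ and the same first symbol, which are honest bijections of $\partial\FF$. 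Either way the geometry is easy; the care is entirely in the measurability and the ``bounded depth'' claim, and once those are in hand the three properties follow and Theorem~\ref{thm:main-0} is obtained by combining Theorem~\ref{thm:pointwise}, Lemma~\ref{lem:cocycle} (with $\Gamma=\FF$ acting on $X$ and the Radon–Nikodym-type cocycle $\alpha:\cR_0\to\FF$), and the identification of $\cB^\alpha_n$ with the horospherical ball averages.
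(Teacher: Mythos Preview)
Your approach is essentially the paper's own. The paper dispatches anchoredness and the extreme Besicovich property in one line (``immediate from the definition''), and for extreme asymptotic invariance it introduces the notion of an inner automorphism of \emph{order $\le n$}: a bijection $\phi':S^n\to S^n$ that rewrites only the first $n$ symbols and fixes the $n$-th symbol, so that $\phi(\xi)=(\phi'(\xi_1,\dots,\xi_n),\xi_{n+1},\dots)$ remains admissible. A short lemma shows these generate $\cR_0$, and then $\cB_n$ is clearly invariant under all automorphisms of order $<n$. This is exactly your ``bounded-depth prefix rewrite'' idea.

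One small correction to your option (b): the replacements $w\mapsto w'$ with $|w|=|w'|$ should share the same \emph{last} symbol, not the first. The admissibility constraint at the junction is $\xi_{n+1}\ne (w'_n)^{-1}$, which is automatic once $w'_n=w_n=\xi_n$; matching first symbols buys you nothing there. With that fix your family of $\phi_{w,w'}$ coincides with the paper's finite-order automorphisms, and your verification that $\phi(\cB_n(\xi))=\cB_n(\phi(\xi))=\cB_n(\xi)$ for $n$ large goes through exactly as you wrote.
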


\subsection{Extreme properties of $\cB$}
\begin{defn}
 We will say that a map $\phi:\partial \FF \to \partial \FF$ has {\em order $\le n$} if there is a bijection $\phi':S^n \to S^n$ such that
 \begin{enumerate}
 \item for any $\xi \in \partial \FF$, $\phi(\xi_1,\ldots, \xi_n) = (\xi'_1,\ldots, \xi'_n, \xi_{n+1},\xi_{n+2},\ldots)$ where $\phi'(\xi_1,\ldots, \xi_n) = (\xi'_1,\ldots, \xi'_n)$;
 \item in the above $\xi'_n=\xi_n$ and, since $\phi(\xi) \in \partial \FF$, $\xi'_{i+1} \ne (\xi'_i)^{-1}$ for any $i$.
 \end{enumerate}
 \end{defn}

\begin{lem}
The set of finite order inner automorphisms of $\cR_0$ generates $\cR_0$.
\end{lem}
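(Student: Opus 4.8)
The plan is to show that every pair $(\xi,\eta)\in\cR_0$ can be connected by a composition of finitely many finite-order inner automorphisms, and in fact by a single one. Recall that $(\xi,\eta)\in\cR_0$ means there exists $N\ge1$ such that $\xi_k=\eta_k$ for all $k>N$. Fix such an $N$. First I would observe that the truncations $(\xi_1,\dots,\xi_N)$ and $(\eta_1,\dots,\eta_N)$ are both admissible words in $S^N$ (i.e.\ satisfying $s_{i+1}\ne s_i^{-1}$) with the same last letter $\xi_N=\eta_N$; this common-last-letter condition is exactly what the definition of order $\le N$ demands in item (2), and it is forced by the requirement $\xi_{N+1}\ne\xi_N^{-1}$ together with $\xi_{N+1}=\eta_{N+1}$ and the same relation for $\eta$.

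Next I would construct the bijection $\phi':S^N\to S^N$ witnessing order $\le N$. The natural choice is to swap the two admissible words $(\xi_1,\dots,\xi_N)$ and $(\eta_1,\dots,\eta_N)$ and fix everything else; but one must be careful because item (1) of the definition requires $\phi(\xi)$ to lie in $\partial\FF$ for \emph{every} $\xi\in\partial\FF$, i.e.\ $\phi'$ must send admissible words ending in a given letter to admissible words ending in the same letter, so that splicing $(\xi'_1,\dots,\xi'_N)$ onto the tail $(\xi_{N+1},\xi_{N+2},\dots)$ is legal. The clean way is to let $\phi'$ be the transposition of the two specific admissible words $(\xi_1,\dots,\xi_N)$ and $(\eta_1,\dots,\eta_N)$ (and the identity on all other elements of $S^N$, admissible or not). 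Since these two words have the same final letter and are both admissible, this transposition preserves both the property ``ends in letter $s$'' and, on admissible words, maps admissible to admissible; hence the induced map $\phi$ on $\partial\FF$ is well-defined, maps $\partial\FF$ to itself, is an involution (so invertible), has graph in $\cR_0$ (since it only alters the first $N$ coordinates), and satisfies $\phi(\xi)=\eta$. Thus $\phi$ is a finite-order inner automorphism of $\cR_0$ with $\phi(\xi)=\eta$, so the set of finite-order inner automorphisms generates $\cR_0$ directly (no composition even needed).

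I would then spell out the two routine verifications: that $\phi$ is Borel (it is piecewise a coordinate permutation on finitely many cylinder sets) and that $\phi\in\Inn(\cR_0)$ in the sense of Section~2, i.e.\ it is an invertible Borel map $B\to B$ with graph contained in $\cR_0$ --- both immediate from the cylinder-set description. Finally, for the generating property as defined in item (1) of Section~2, one needs: for all $(\xi,\eta)\in\cR_0$ there is $\psi$ in the group generated by the finite-order inner automorphisms with $\psi(\xi)=\eta$; the single $\phi$ above already does this.

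The main obstacle --- really the only subtle point --- is the well-definedness of $\phi$ on all of $\partial\FF$: one must check that the transposition $\phi'$ on $S^N$ respects the ``same last letter'' constraint so that item (2) holds for \emph{every} $\xi$, not just the two words being swapped, and that concatenating a permuted admissible prefix with an arbitrary admissible tail never creates a forbidden adjacency $\xi'_N=\xi'_{N+1}{}^{-1}$ at the splice point. This is handled precisely because we only swap two words sharing the final letter $\xi_N$, leaving the letter in position $N$ (hence the splice-point adjacency) unchanged for every input sequence. Everything else is bookkeeping with cylinder sets.
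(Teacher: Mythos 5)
Your construction is essentially the paper's own argument made explicit: the paper simply posits a bijection $\phi'$ of $S^N$ sending $(\xi_1,\ldots,\xi_N)$ to $(\eta_1,\ldots,\eta_N)$ and preserving admissibility, and your transposition of the two prefixes is the natural realization of it. However, there is one false step, and it occurs exactly at the point your whole well-definedness argument leans on. Having chosen $N$ with $\xi_k=\eta_k$ for all $k>N$, you assert that $\xi_N=\eta_N$ is \emph{forced} by $\xi_{N+1}\ne\xi_N^{-1}$, $\eta_{N+1}\ne\eta_N^{-1}$ and $\xi_{N+1}=\eta_{N+1}$. That inference is wrong: since $r\ge 2$ there are at least four letters, so one can have, say, $\xi_N=a_1$, $\eta_N=a_2$ and $\xi_{N+1}=\eta_{N+1}=a_1$; both adjacency constraints hold while the last letters differ. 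For such a pair your transposition is not a finite-order inner automorphism in the sense of the definition: item (2) requires $\xi'_N=\xi_N$, and the splice genuinely fails --- any $\zeta\in\partial\FF$ with prefix $(\xi_1,\ldots,\xi_N)$ and $\zeta_{N+1}=\eta_N^{-1}$ (which is admissible precisely because $\xi_N\ne\eta_N$) is sent to a sequence containing the forbidden adjacency $\eta_N$ followed by $\eta_N^{-1}$, so $\phi$ does not map $\partial\FF$ into itself.

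The repair is immediate, and with it your proof is complete and coincides with the paper's: either invoke the paper's definition of $\cR_0$, in which $\xi_n=\eta_n$ for all $n\ge N$, so $\xi_N=\eta_N$ holds by definition, or simply replace $N$ by $N+1$ before building the transposition, since the length-$(N+1)$ prefixes do share their last letter. Everything else you verify --- that the transposition preserves admissibility and the final letter, that the induced map is Borel, is an involution, has graph contained in $\cR_0$, and that a single finite-order automorphism already carries $\xi$ to $\eta$ --- is correct, and in fact supplies details that the paper's proof leaves implicit.
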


\begin{proof}
If $(\xi,\eta) \in \cR_0$ then there exists an $N$ such that $n\ge N$ implies $\xi_n=\eta_n$. Let $\phi':S^N \to S^N$ be a bijection satisfying
\begin{enumerate}
\item $\phi'(\xi_1,\ldots, \xi_N) = (\eta_1,\ldots, \eta_N)$;
\item if $(s_1,\ldots, s_N) \in S^N$ satisfies $s_{i+1} \ne s_i^{-1}$ for any $i$ and $\phi'(s_1,\ldots,s_N) = (t_1,\ldots, t_N)$ then $t_{i+1}\ne t_i^{-1}$ for any $i$.
\end{enumerate}
Define $\phi \in \Inn(\cR_0)$ by $\phi(s_1,s_2,\ldots) = (t_1,\ldots, t_N, s_{N+1}, \ldots)$ where $\phi'(s_1,\ldots,s_N)=(t_1,\ldots,t_N)$. This is an inner automorphism of finite order and clearly $\phi(\xi)=\eta$.
\end{proof}

\begin{proof}[Proof of Theorem \ref{thm:extreme1}]
It is immediate from the definition of $\cB$ that $\cB$ is anchored and satisfies the extreme Besicovich property. To see that it is extremely asymptotically invariant, let $\Phi \subset \Inn(\cR_0)$ be the set of finite order inner automorphisms. This is a countable generating set by the previous lemma. Clearly, each $\cB_n$ is preserved under all automorphisms of order $<n$. 
\end{proof}

\subsection{A horospherical ratio pointwise ergodic theorem}\label{sec:horoergodic}

Let $(X,\lambda)$ be a standard $\sigma$-finite measure space on which $\FF$ acts non-singularly. Let $\alpha:\cR_0 \to \FF$ be the cocycle $\alpha(\xi,\eta)=g \in \FF$ where $g\eta = \xi$ (e.g., $g = (\xi_1\cdots \xi_N)(\eta_1\cdots \eta_N)^{-1}$ if $\xi_n=\eta_n$ for all $n\ge N$). Define the equivalence relation $\cR_0^\alpha=\cR_0^\alpha(X)$ on $\partial \FF \times X$ and the sequence of subset functions $\cB^\alpha=\{\cB^\alpha_n\}_{n=1}^\infty$ as in \S \ref{sec:cocycle}. By Theorem \ref{thm:extreme1} and Lemma \ref{lem:cocycle}, it follows that $\cB^\alpha$ is anchored, extremely asymptotically invariant and satisfies the extreme Besicovich property. So Theorem \ref{thm:pointwise} implies:

\begin{thm}\label{thm:pointwise2}
$\cB^\alpha$ is a ratio ergodic sequence in $L^1$. I.e., for every $u,v\in L^1( \partial \FF \times X,\nu\times \lambda)$ with $v > 0$, $\RATIO^{\cB^\alpha}_{n}[u,v]$ converges pointwise a.e. to $\EE_{(\nu\times\lambda)_v}\left[\frac{u}{v}|\cR^\alpha_0\right]$.
 \end{thm}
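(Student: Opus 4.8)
The plan is to verify that Theorem \ref{thm:pointwise2} is an immediate consequence of the machinery already assembled, and the proof amounts to checking that all hypotheses are in place. First I would recall that $\alpha:\cR_0 \to \FF$ as defined is a genuine Borel cocycle: the defining relation $g\eta = \xi$ determines $g$ uniquely (since the $\FF$-action on $\partial\FF$ is free enough — precisely, $g\xi = \xi$ with $g\in H_\xi$ forces $g=e$, because $g^{-1}$ fixes the tail of $\xi$ and also fixes the initial segment up to the point where $g$ could act, hence fixes all of $\xi$ only if $g=e$), and the cocycle equation $\alpha(\xi,\eta)\alpha(\eta,\zeta) = \alpha(\xi,\zeta)$ follows by composing the corresponding group elements. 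Measurability of $\alpha$ follows from the explicit formula $g = (\xi_1\cdots\xi_N)(\eta_1\cdots\eta_N)^{-1}$ on the Borel set where $\xi$ and $\eta$ agree from coordinate $N$ onward. Since $\FF$ acts non-singularly (hence by measure-class-preserving transformations) on $(X,\lambda)$, the hypotheses of \S\ref{sec:cocycle} are met, so the twisted equivalence relation $\cR_0^\alpha$ on $\partial\FF\times X$ and the twisted subset functions $\cB^\alpha = \{\cB^\alpha_n\}$ are well-defined.

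Next I would invoke Theorem \ref{thm:extreme1}, which asserts that $\cB$ is anchored, extremely asymptotically invariant, and satisfies the extreme Besicovich property, together with Lemma \ref{lem:cocycle}, which says precisely that each of these three properties is inherited by the pushforward $\cF^\alpha$ along a Borel cocycle. Applying the lemma with $\cF = \cB$ gives that $\cB^\alpha$ is anchored, extremely asymptotically invariant, and satisfies the extreme Besicovich property. At this point all the hypotheses of Theorem \ref{thm:pointwise} are satisfied for the equivalence relation $\cR_0^\alpha$ and the sequence $\cB^\alpha$, with base space $(\partial\FF\times X, \nu\times\lambda)$. (One should note that $\cR_0^\alpha$ is discrete and quasi-invariant with respect to $\nu\times\lambda$: discreteness because each $\cR_0$-class is countable and the $x$-coordinate is then determined by $\alpha$, and quasi-invariance because $\nu$ is $\cR_0$-invariant — indeed $\frac{d\nu\circ g^{-1}}{d\nu}(\xi) = 1$ for $g\in H_\xi$ — while the $\FF$-action on $X$ is non-singular, so null sets of $\nu\times\lambda$ have null saturations.)

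Finally, Theorem \ref{thm:pointwise} applied to this data yields exactly the conclusion: for every $u,v \in L^1(\partial\FF\times X, \nu\times\lambda)$ with $v>0$, the ratios $\RATIO^{\cB^\alpha}_n[u,v]$ converge pointwise almost everywhere as $n\to\infty$ to the $\cR_0^\alpha$-invariant function $\EE_{(\nu\times\lambda)_v}\!\left[\frac{u}{v}\big|\cR_0^\alpha\right]$, which reduces to the constant $\frac{\int u\, d(\nu\times\lambda)}{\int v\, d(\nu\times\lambda)}$ when $\cR_0^\alpha$ is ergodic. I do not anticipate a genuine obstacle here — the theorem is assembled from Theorem \ref{thm:pointwise}, Theorem \ref{thm:extreme1}, and Lemma \ref{lem:cocycle} by direct citation. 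The only point deserving a sentence of care is the verification that $\alpha$ is a well-defined Borel cocycle (i.e., that $g$ with $g\eta=\xi$ is unique), and the observation that the quantity $\RATIO^{\cB^\alpha}_n[u,v]$ indeed coincides with the horospherical average appearing in Theorem \ref{thm:main-0} — namely that $\cB^\alpha_n(\xi,x) = \{(g^{-1}\xi, g^{-1}x) : g^{-1}\in H_\xi,\ |g|\le 2n\}$ and that the Radon-Nikodym cocycle $D$ of $\cR_0^\alpha$ restricted to such pairs computes the factor $\frac{dg\lambda}{d\lambda}$ appearing in $\hT^g$, which is where the identification with the statement of the main theorem is completed.
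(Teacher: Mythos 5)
Your proposal is correct and follows exactly the paper's own route: Theorem \ref{thm:extreme1} together with Lemma \ref{lem:cocycle} shows $\cB^\alpha$ is anchored, extremely asymptotically invariant and satisfies the extreme Besicovich property, and Theorem \ref{thm:pointwise} applied to $\cR_0^\alpha$ on $(\partial\FF\times X,\nu\times\lambda)$ then gives the conclusion. Your added verifications are harmless; the only slight inaccuracy is the claim that $g$ with $g\eta=\xi$ is always unique --- nontrivial elements of $\FF$ do fix their boundary endpoints, but these form a countable, hence $\nu$-null, set, so $\alpha$ is well defined almost everywhere, which is all that is needed.
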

 
Theorem \ref{thm:main-0} is implied by Theorem \ref{thm:pointwise2} above because $\cB^\alpha(x,\xi)=\{g(x,\xi):~g^{-1}\in H_\xi, |g|\le 2n\}$ implies
 $$\RATIO^{\cB^\alpha}_{n}[u,v]= \frac{ \sum_{g^{-1} \in H_\xi, |g|\le 2n} \hT^gu(x,\xi)}{  \sum_{g^{-1} \in H_\xi, |g|\le 2n} \hT^gv(x,\xi)}.$$

 \section{Ergodic components of the horospherical relation : some examples}\label{sec:ergodic}

Observe that the cocycle $\alpha: \cR_0 \to \FF$ defined above takes values in $\FF^2:=$ the index $2$ subgroup of $\FF$ consisting of all words of even reduced length. Let $\FF \cc (X,\lambda)$ be a non-singular action on a standard $\sigma$-finite measure space. Theorem \ref{thm:pointwise2} raises the following two natural problems. 
 \begin{enumerate}
 \item If $\FF \cc (X,\lambda)$ is ergodic then is the diagonal action $\FF \cc (X\times \partial \FF, \lambda \times \nu)$ ergodic?
 
 \item If the diagonal action $\FF^2 \cc (X\times \partial \FF, \lambda \times \nu)$ is ergodic then is the equivalence relation $\cR_0^\alpha(X)$ ergodic?
 \end{enumerate}

In [BN1] it was shown that, if $(X,\lambda)$ is a probability space and the action preserves the measure then the answer to both questions is `yes'. We show here that both questions have negative answers in general.

\begin{example}
This is a counterexample to (1). Let $X=(\partial \FF\times \partial \FF)\setminus \Delta$ be the set of pairs of distinct points in the boundary $\partial \FF$, which can also be identified with set of bi-infinite geodesics in the Cayley tree.  
As is well known, there exists a Radon measure $\lambda$ on $X$, which is absolutely continuous to the product measure $\nu\times \nu$ and invariant under the action of $\FF$ on $X$ (given by the diagonal action on the product). Moreover $\FF \cc (X,\lambda)$ is ergodic by Kaimanovich's double ergodicity Theorem \cite{Ka03} (which can easily be proven directly in this special case).

Consider now the space $\partial \FF \times X$ with the product measure $\nu\times \lambda$, and the invariant conull set consisting of triples of distinct points. 
It is well-known that the action of $\FF$ on the latter set is in fact proper (as an action on a metric space) and in particular admits a global Borel section. Since the equivalence relation $\cR$ is a sub relation 
of the relation determined by the $\FF$-orbits, it follows that  $\cR$ admits many $\cR$-invariant sets of intermediate measure and is certainly not ergodic. In fact, the $\FF$-action on the product $\partial \FF \times X$ is measure-theoretically smooth. 
\end{example}

\begin{example}
The following provides further counterexamples to (1). Let $G=SL_2(\RR)$, and $X=G/H$, with $H$ a unimodular  connected subgroup, and $\lambda$ taken as the $G$-invariant measure on $G/H$. Let $\FF$ act via a dense homomorphism $\pi : \FF\to G$. 

We want to consider the question of ergodicity of $\FF$ on the product $\partial \FF \times X$, and for that let us view the homomorphism $\pi$  as a cocycle 
$\alpha :  \FF \times \partial\FF \to G$ via $\alpha(g,b)=\pi(g)$. The Mackey range $Y$ of the cocycle is the space of ergodic components of the  induced action of $\FF$ on $\partial \FF \times G$ (by $f(b,g)=(fb, \alpha(f,b)g)$). The group $G$ acts on $\partial \FF \times G$ via $g(f,h)=(f,hg^{-1})$. Because this action commutes with the action of $\FF$, it descends to an action on $Y$. Since the action of $\FF$ on $\partial \FF$ is amenable and ergodic, $G$ acts amenably and ergodically on $Y$. By \cite{Z3}, any ergodic amenable action of $G$ factors over a transitive amenable action $G/L$, with $L$ being a closed amenable subgroup.  

 Since $L$ is amenable, it leaves a probability measure on the boundary of hyperbolic space invariant, and therefore it is either conjugate to a subgroup of the maximal compact subgroup $K$, or conjugate to a subgroup of the minimal parabolic subgroup $P$, or to a subgroup of the group $N_K(A)$, the normalizer in $K$ of the diagonal group $A$. By passing to a further factor  if necessary, we can therefore assume either $L=K$, $L=P$ or $L=N_K(A)$. 

Since $G/L$ is a factor of the Mackey range of the cocycle $\alpha$, it follows that $\alpha$ is cohomologous to a cocycle taking values in $L$, and this is equivalent to $\partial \FF$ 
admitting an $\alpha$-invariant map $\psi$ to $G/L$. Since the cocycle is defined by $\alpha(g,b)=\pi(g)$, this 
amounts to a measurable non-singular map $\psi$, equivariant with respect to the actions of $\FF$ on both sides. 

Thus we have 
a non-singular factor map $\psi : \partial \FF\to   G/L$.  Now in general (see \cite{Z2}), for any $G$-space $X$, $\FF$ is ergodic on $\partial \FF\times X$ (acting via $\pi$) if and only if $G$ is ergodic on $Y\times X$.  Choosing $X=G/H$,  by Moore's ergodicity theorem (see e.g. \cite{Z1}),  $G$ is ergodic on $G/L\times G/H$ if and only if $H$ is ergodic on $G/L$. Since the Mackey range $Y$ factors over $G/L$,  it follows of course that  $G$ is not ergodic on $Y\times G/H$ if $H$ is not ergodic on $G/L$.  Choosing $H$ to be the trivial group $\set{e}$, the maximal compact subgroup $K$,  or  the diagonal subgroup $A$, we obtain that $H$ is not ergodic on $G/L$, for any of the admissible  choices of $L$, namely $L=K$, $L=P$ or $L=N_K(A)$.  

Thus taking $X$ to be the group itself ($X=SL_2(\RR)$), or the hyperbolic plane ($X=\HH^2=G/K$) or the de-Sitter space ($X=G/A$), we conclude that the $\FF$-action on $\partial \FF\times X$ is not ergodic. 
\end{example}

\begin{example}
To obtain a counterexample to question (2), we consider the action of $\FF$ on $(\partial \FF \times \partial \FF, \nu \times \nu)$. This action is ergodic (e.g. by Kaimanovich's double ergodicity theorem, which can easily be proven directly in this special case). The induced action of $\FF^2$ on  $(\partial \FF \times \partial \FF, \nu \times \nu)$ is also ergodic. However, the equivalence relation $\cR_0^\alpha(\partial \FF)$ is not ergodic. To see this, note that a point $(b,c) \in \partial \FF \times \partial \FF$ with $b\ne c$ determines a geodesic $\gamma:\ZZ \to \FF$ from $b$ to $c$. Let $H_c$ be the horosphere centered at $c$ that passes through the identity element. Without loss of generality we may assume that $\gamma(0)$ is contained in $H_c$. Define $J_{(b,c)}(n) = \gamma(n)^{-1}\gamma(n+1)$. We claim that the map $(b,c) \in \partial \FF \times \partial \FF \mapsto J_{(b,c)}$ is invariant under the relation $\cR_0^\alpha(\partial\FF)$. Because the map $(b,c) \mapsto J_{(b,c)}$ is not constant a.e. this proves that $\cR_0^\alpha(\partial\FF)$ is not ergodic. 

Any element in the $\cR_0^\alpha(\partial\FF)$-equivalence class of $(b,c)$ has the form $(g^{-1}b,g^{-1}c)$ where $g\in H_c$. Notice that $g^{-1}H_c = H_{g^{-1}c}$. Therefore, $g^{-1}\gamma$ is the geodesic from $g^{-1}b$ to $g^{-1}c$ and $g^{-1}\gamma(0)$ is contained in $H_{g^{-1}c}$. So if $J'=J_{(g^{-1}b,g^{-1}c)}$ then 
$$J'(n) = (g^{-1}\gamma(n))^{-1} g^{-1}\gamma(n+1) = \gamma(n)^{-1}\gamma(n+1) = J_{(b,c)}(n).$$
This proves the claim. 
\end{example}

It would be useful to establish general conditions on an ergodic non-singular action $\FF \cc (X,\lambda)$ which imply positive answers to questions (1) and (2) above.  As am example, note that even the following is not known. 
\begin{question}
Suppose $\FF \to \text{Isom}(\EE^n)$ is a dense embedding into the isometry group of Euclidean $n$-space. Through this embedding, $\FF$ acts by isometries on Euclidean $n$-space. Is the induced action $\FF \cc (\EE^n \times \partial \FF, \lambda^n \times \nu)$ ergodic? Here $\lambda^n$ is the Lebesgue measure on $\EE^n$. If so, then is the equivalence relation $\cR_0^\alpha({\EE^n})$ ergodic?
\end{question}

{\small

}


\begin{thebibliography}{1000000}

\bibitem[AK63]{AK63} Arnol'd, V. I. and Krylov, A. L. \textit{Uniform distribution of points on a sphere and certain ergodic properties 
of solutions of linear ordinary differential equations in a complex domain}. Dokl. Akad. Nauk SSSR 148 
(1963), 9--12. 

\bibitem[AL05]{AL05} Aaronson, J. and Lema\'nczyk, M. \textit{Exactness of Rokhlin endomorphisms and weak mixing of Poisson boundaries}. Algebraic and topological dynamics, 77--87, Contemp. Math., 385, Amer. Math. Soc., Providence, RI, 2005.



 









\bibitem[BN10]{BN1} Bowen, L. and Nevo, A., \textit{Geometric covering arguments and ergodic theorems for free groups}. Math. ArXiv 0912.4953, Sept. 2010.  Submitted.


\bibitem[BN11]{BN2} Bowen, L. and Nevo, A., \textit{Pointwise ergodic theorems  beyond amenable groups}. arXiv:1103.2519. To appear in Ergodic Theory and Dynamical Systems. 






\bibitem[Brxx]{Brxx} Breuillard, E. \textit{Random walks on Lie groups, survey}. Preprint, \verb|http://www.math.u-psud.fr/~breuilla/part0gb.pdf|

\bibitem[Br10]{Br10} Breuillard, E. \textit{
Equidistribution of dense subgroups on nilpotent Lie groups.}
Ergodic Theory Dynam. Systems 30 (2010), no. 1, 131--150.

\bibitem[Br05]{Br05} Breuillard, E. \textit{
Local limit theorems and equidistribution of random walks on the Heisenberg group.}
Geom. Funct. Anal. 15 (2005), no. 1, 35--82.

\bibitem[Bu00]{Bu} Bufetov, A. I. \textit{Convergence of spherical averages for actions of free groups}.  Ann. of Math. (2)  {\bf 155}  (2002),  no. 3, 929--944.



\bibitem[BK]{BK} Bufetov, A. and Klimenko, A. \textit{On Markov Operators and Ergodic Theorems for Group Actions}. Preprint, February 2012. 









\bibitem[DS]{DS} Dunford, D. and Schwartz, J. \textit{Linear operators, Part I}. Wiley Classics Library Edition Published 1988.  

\bibitem[Fe07]{Fe07} Feldman, J. \textit{A ratio ergodic theorem for commuting, conservative, invertible transformations with quasi-invariant measure summed over symmetric hypercubes}. Ergodic Theory Dynam. Systems 27 (2007), no. 4, 1135--1142.





\bibitem[Ga70]{Ga70} Garsia, A., \textit{Lectures in almost everywhere convergence}. Lectures in Advanced Mathematics, \textbf{4} (1970) Markham Publishing Company. 




\bibitem[G03]{G1} A. Gorodnik, \textit{Lattice action on the boundary of ${\rm SL}(n,\Bbb R)$.}
Ergodic Theory Dynam. Systems 23 (2003), no. 6, 1817--1837.

 
\bibitem[GM05]{gm}  A. Gorodnik and F. Maucourant, \textit{
Proximality and equidistribution on the Furstenberg boundary.} Geom. Dedicata 113 (2005), 197--213.


\bibitem[GN08]{GN1} 
A. Gorodnik and A. Nevo, \textit{The ergodic theory of lattice subgroups,} Annals of Mathematics Studies 172,
Princeton University Press, 2010. 



\bibitem[GN12a]{GN3} A. Gorodnik and A. Nevo, \textit{
On Arnold's and Kazhdan's equidistribution problems,} Erg. Th. \& Dyn. Sys., To appear. 



\bibitem[GN12b]{GN2} Gorodnik, A.  and Nevo. A., 
\textit{Ergodic theory and the duality principle on homogeneous spaces}.  Math. ArXiv 1205.4423, May 2012. Submitted. 

\bibitem[GO07]{go} A. Gorodnik and H. Oh, \textit{
Orbits of discrete subgroups on a symmetric space and the Furstenberg boundary.}
Duke Math. J. 139 (2007), no. 3, 483--525. 


\bibitem[GW07]{GW} A. Gorodnik and B. Weiss, \textit{
Distribution of lattice orbits on homogeneous varieties.}
Geom. Funct. Anal. 17 (2007), no. 1, 58--115. 


\bibitem[Gr99]{Gr99} Grigorchuk, R.I. \textit{Ergodic theorems for the actions of a free group and a free 
semigroup}. Mat. Zametki 65 (1999), no. 5, 779--783; translation in Math. Notes 
65 (1999), no. 5-6, 654--657. 


\bibitem[Gu68]{Gu68} Guivarc'h, Y. \textit{G\'en\'eralisation d'un
th\'eor\`eme de von-Neumann}. C. R. Acad. Sci. Paris, \textbf{268}
(1969), 1020-1023.  



\bibitem[Gu76]{Gu76} Guivarc'h, Y. \textit{Equir\'epartition dans les espaces homog\'enes}. Th\'eorie ergodique (Actes Journ\'ees 
Ergodiques, Rennes, 1973/1974) (Lecture Notes in Mathematics, 532). Springer, Berlin, 1976, 
pp. 131--142. 


\bibitem[Ho10]{Ho10} Hochman, M. \textit{A ratio ergodic theorem for multiparameter actions}. J. Eur. Math. Soc. (JEMS), Volume 12, Issue 2, 2010, pp. 365--383. 



\bibitem[Ho37]{Ho37} Hopf, E. \textit{Ergodentheorie}. Number 5 serii Ergebnisse der Mathematik. Springer, Berlin, 
1937. 


\bibitem[Hu44]{Hu44} Hurewicz, W. \textit{Ergodic theorem without invariant measure}. Ann. of Math. (2), 45:192--
206, 1944.


\bibitem[Ka03]{Ka03} Kaimanovich, V. A., \textit{Double ergodicity of the Poisson boundary and applications to bounded cohomology}.  Geom. Funct. Anal.  \textbf{13}  (2003),  no. 4, 852--861.

 \bibitem[Ka65]{Ka65} Kazdan, D. A. \textit{Uniform distribution on a plane.}Trudy Moskov. Mat. Obsc. 14 1965 299--305.
 




\bibitem[L99]{L99} F. Ledrappier, \textit{
Distribution des orbites des r\'seaux sur le plan r\'eel.}
 C. R. Acad. Sci. Paris S\'er. I Math. 329 (1999), no. 1, 61--64.

\bibitem[L01]{L01} F. Ledrappier, \textit{Ergodic properties of some linear actions}, Journal of Mathematical Sciences,  Vol. 105,  (2001) pp.  1861-1875. 



\bibitem[LP03]{LP03} F. Ledrappier and M. Pollicott, \textit{
Ergodic properties of linear actions of $(2\times 2)$-matrices.} Duke Math. J. 116 (2003), no. 2, 353--388.

\bibitem[LP05]{lLP05}  F. Ledrappier and M. Pollicott, 
\textit{Distribution results for lattices in ${\rm SL}(2,\Bbb Q\sb p)$. }
Bull. Braz. Math. Soc. (N.S.) 36 (2005), no. 2, 143--176.













\bibitem[Ne94]{N1} Nevo, A., \textit{Harmonic analysis and pointwise ergodic theorems for non-commuting transformations.} J. Amer. Math. Soc. \textbf{7} (1994), 875-902.




\bibitem[NS94]{NS} Nevo, A. and Stein, E., \textit{A generalization of Birkhoff's pointwise ergodic theorem},  Acta Math.  {\bf 173}  (1994),  no. 1, 135--154. 





\bibitem[Va12]{Va} P. Varj\'u, \textit{Random walks in Euclidean space}. Math. ArXiv, 1205.3399. 2012.



\bibitem[Vo04]{Vo04} Vorobets, Ya. B. \textit{On the uniform distribution of orbits of finitely generated groups and semigroups of plane isometries}. Mat. Sb. 195 (2004), no. 2, 17--40; translation in 
Sb. Math. 195 (2004), no. 1-2, 163--186.




\bibitem[Zi78a]{Z2} Zimmer, R., \textit{Amenable ergodic group actions and an application to Poisson boundaries of random walks}. J. Funct. Anal. \textbf{27} (1978), 350-372. 


\bibitem[Zi78b]{Z3} R. J. Zimmer, {\it Induced and amenable 
actions of Lie groups}. Ann. Sci. Ec. Norm. Sup. {\bf 11}, pp. 407-28 (1978).


\bibitem[Zi84]{Z1} Zimmer, R., \textit{Ergodic Theory and Semisimple Groups}. Monographs in Mathematics, 81, Birkhauser, 1984.  

\end{thebibliography}
\end{document}